\documentclass[onefignum,onetabnum]{siamart251216}

\usepackage{tikz-cd}
\newcommand{\Hone}{H_1}
\newcommand{\Hhyb}{H_{\text{hyb}}}
\newcommand{\Htwo}{H_2} 
\newcommand{\Hhybap}{\widetilde{\Hhyb}}
\newcommand{\Htwoap}{\widetilde{\Htwo}} 
\newcommand{\Honeap}{\widetilde{\Hone}} 
\renewcommand{\AA}{\mathcal{A}} 

\usepackage{amsmath}
\usepackage{amssymb}
\usepackage{lipsum}
\usepackage{amsfonts}
\usepackage{graphicx}
\usepackage{epstopdf}
\usepackage{algorithmic}
\ifpdf
  \DeclareGraphicsExtensions{.eps,.pdf,.png,.jpg}
\else
  \DeclareGraphicsExtensions{.eps}
\fi

\newsiamremark{remark}{Remark}
\newsiamremark{hypothesis}{Hypothesis}
\crefname{hypothesis}{Hypothesis}{Hypotheses}
\newsiamthm{claim}{Claim}
\newsiamremark{fact}{Fact}
\crefname{fact}{Fact}{Facts}

\usepackage{amsopn}

\usepackage{subcaption}
\usepackage{tikz}
\newcommand{\ket}[1]{ | #1 \rangle} 
 
\newcommand{\braket}[2]{ \langle #1| #2 \rangle}
\newcommand{\bigO}[1]{\mathcal{O}\left( #1 \right)}
\definecolor{darkgreen}{rgb}{0,0.5,0}

\headers{Quantum DD for Preconditioning the Finite Element Method}{E. Fressart, M. Nowak, and N. Spillane}

\title{Quantum Domain Decomposition for Preconditioning the Finite Element Method}

\author{Elise Fressart\footnotemark[2] \thanks{Thales CortAIx-Labs, Palaiseau, France 
  (\email{elise.fressart@thalesgroup.com} \email{michel.nowak@thalesgroup.com}).}
\and Michel Nowak\footnotemark[1]
\and Nicole Spillane\thanks{CNRS, CMAP, \'Ecole polytechnique, Institut Polytechnique de Paris, Palaiseau, France (\email{nicole.spillane@cnrs.fr}).}}

\ifpdf
\hypersetup{
  pdftitle={Quantum DD for preconditioning the Finite Element Method},
  pdfauthor={E. Fressart, M. Nowak, and N. Spillane}
}
\fi

\begin{document}

\maketitle

\begin{abstract}
Even in cases where quantum linear solvers provide significant speedup compared to their classical counterparts, their performance depends on some of the same parameters. In particular, the condition number of the matrix which is to be inverted is a decisive parameter. A well known classical, and now quantum, remedy is to precondition the linear system $A x = b$ by premultiplying it by a matrix $H$ in such a way that the condition number of $HA$ is significantly smaller than the condition number of $A$. 

In this work, we focus on a family of preconditioners called domain decomposition. First, we prove that it is feasible to apply quantum domain decomposition. We provide upper bounds for the block-encoding parameters of the Poisson problem discretized by the finite element method and preconditioned by the two-level Additive Schwarz preconditioner (one of the most fundamental domain decomposition techniques). From these bounds, we deduce the complexity of the quantum linear system solver.  Second, we focus on a particular choice of local solver within the domain decomposition preconditioner by applying recent work by [Deiml and Peterseim, \textit{Math. Comput.}, 2025] on the Bramble--Pasciak--Xu (BPX) preconditioner. Finally, we provide details on how the operators are implemented.
\end{abstract}
\begin{keywords}
quantum linear system, quantum algorithm, domain decomposition, partial differential equation, finite element method
\end{keywords}

\begin{MSCcodes}
81P68, 65N12, 65N30
\end{MSCcodes}

\tableofcontents

\section{Introduction}

The numerical solution of partial differential equations, or PDEs, plays a central role in a wide range of scientific and engineering applications, including fluid dynamics, electromagnetism, and solid mechanics. Standard approaches rely on spatial discretization, which typically leads to large-scale sparse linear systems whose size grows rapidly as higher accuracy is sought. For fine-grid discretizations, the resulting linear algebra problems can become prohibitively expensive to solve using classical (direct or iterative) methods, due to both computational complexity and memory constraints. This scalability bottleneck has motivated the exploration of alternative computational paradigms, including quantum algorithms for linear systems and PDEs. In \cite{Harrow_2009}, Harrow, Hassidim, and Lloyd introduced the first quantum algorithm for solving linear systems of equations (the HHL algorithm). The complexity of quantum linear system solvers (QLSS) has since been improved to a dependency that is linear in the matrix condition number and logarithmic in the inverse of the precision see \textit{e.g.}, \cite{zbMATH06070940,10.1145/3498331,  zbMATH06823707, PRXQuantum.3.040303, Gily_n_2019, Lin2020optimalpolynomial,PhysRevLett.122.060504} and \cite{morales2025quantumlinearsolverssurvey} for a survey of the different propositions. Variational approaches to quantum linear system solving have also been proposed for the Noisy Intermediate-Scale Quantum regime, known as NISQ, in which quantum resources are limited \cite{BravoPrieto2023variationalquantum}. 

Quantum linear solvers have already been applied to the linear systems arising from the finite element discretization of partial differential equations. It has been shown in \cite{PhysRevA.93.032324} that, for smooth solutions of the Poisson equation, a quantum advantage can be achieved for large problem dimensions. As in the classical setting, the performance of QLSSs depends on the condition number of the system matrix. In the finite element context, the condition number typically grows with the problem size, which may ultimately negate the expected quantum speedup.

In classical numerical linear algebra, this issue is typically fixed by preconditioning: a preconditioner $H \approx A^{-1}$ is applied to improve the conditioning of the original system $Ax = b$, leading to the preconditioned system $HA x = Hb$. If $H$ is well chosen, the preconditioned system is solved more efficiently than the original one. Quantum preconditioning strategies have been proposed by \cite{ bagherimehrab2025fastquantumalgorithmdifferential,PhysRevLett.110.250504, gu2025quantumsimulationhelmholtzequations,Low2026quantumlinearsystem,PhysRevA.98.062321,PhysRevA.104.032422}. In particular, \cite{PhysRevLett.110.250504} introduces a QLSS using a sparse approximate inverse (SPAI) preconditioner, while \cite{PhysRevA.98.062321} studies a circulant preconditioning approach.

Recently, \cite{Deiml_2025} proposed a quantum linear system solver based on the multilevel Bramble--Pasciak--Xu (BPX) preconditioner \cite{zbMATH04197323} \textit{via} a block-encoding of the preconditioned system in its symmetrized form. The BPX preconditioning strategy has also been incorporated into the Schrödingerization framework \cite{jin2025quantumpreconditioningmethodlinear}, where singular value amplification is used to extract the solution. In a related direction, \cite{childs2026quantumalgorithmsheterogeneouspdes} develops quantum algorithms for the neutron diffusion $k$-eigenvalue problem using BPX-based techniques and fast inversion methods. Quantum linear system solvers have also been integrated into multiscale numerical homogenization frameworks to accelerate the computation of local fine-scale problems \cite{balazi2026quantumenhancednumericalhomogenization}.

The present work is inspired by the, previously mentioned, article \cite{Deiml_2025}. Whereas \cite{Deiml_2025} considers a multigrid-type preconditioner \cite{zbMATH04197323,arXiv:2512.06166}, we focus on preconditioning by domain decomposition. We borrow several ideas from \cite{Deiml_2025} and also apply their quantum BPX algorithm within our own preconditioner. In classical numerical linear algebra, domain decomposition methods constitute a well-established framework for constructing scalable preconditioners for elliptic problems \cite{zbMATH02113718}. They rely on a decomposition of the computational domain into subdomains, where local problems are solved and combined through a coarse space correction to obtain global convergence. Classical theory shows that, with appropriately designed coarse spaces, such methods yield condition number bounds that are robust with respect to mesh size and coefficient heterogeneities \cite{zbMATH05029831,zbMATH05172548,zbMATH01592007,zbMATH05651315,spillane_geneo_2025,zbMATH06303461,zbMATH02113718}. The contribution of this work is to show that domain decomposition preconditioners can be incorporated into a QLSS.
This establishes the feasibility of implementing domain decomposition preconditioners in a quantum linear system framework, thereby complementing existing quantum preconditioning strategies based on multilevel, sparse approximate inverse and circulant techniques.
We note for completeness that \cite{busaleh2026mitigatingbarrenplateausdomain} utilizes domain decomposition concepts in variational quantum algorithms, albeit in a distinct setting and for a different objective to ours. Indeed, \cite{busaleh2026mitigatingbarrenplateausdomain} introduces a domain decomposition based strategy to mitigate Barren plateau–like phenomena in variational quantum algorithms. The approach relies on localizing the cost function by partitioning the spatial domain into overlapping subdomains, each associated with a localized parametrized quantum circuit and measurement operator. This improves trainability compared to global variational formulations.

The article is organized as follows. Section~\ref{sec:FEM} presents the finite element setting for the scalar elliptic problem. Section~\ref{sec:DD} introduces the concepts of domain decomposition and discusses the potential advantages of domain decomposition in the quantum framework (Section~\ref{sec:DD-adv}). A particular domain decomposition preconditioner, the two-level Additive Schwarz preconditioner is defined.  Spectral bounds for the resulting preconditioned operator are stated in Theorem~\ref{th:DDtheory}. Following ideas from \cite{Deiml_2025}, the preconditioned system is written in a symmetrized form. This way, the spectral bounds imply a bound on the condition number which is later needed for the analysis of the QLSS. In Section~\ref{sec:quantum}, we construct a block-encoding of the symmetric preconditioned system, describe its implementation and analyze the cost of inverting it by a quantum singular value transformation. Emphasis is put on the case where BPX is used to solve each local problem within the domain decomposition preconditioner. In Section~\ref{sec:numerical} we present a proof-of-concept numerical simulation of the quantum circuits. 

\paragraph{Notation}
For any integer $m$, we denote by $I_m$ the identity matrix in $\mathbb R^{m \times m}$. For any matrix $A$, the spectral condition number is defined by
\[
\kappa(A)=\|A\|_2\,\|A^+\|_2
=\frac{\sigma_{\max}(A)}{\sigma_{\min}^+(A)},
\]
where $A^+$ denotes the pseudoinverse and
$\sigma_{\min}^+(A)$ is the smallest nonzero singular value of $A$. In the special case where $A$ is a non-singular positive normal matrix (\textit{e.g.}, Hermitian positive definite), the singular values are also the eigenvalues so
\[
\kappa(A)=\frac{\lambda_{\max}(A)}{\lambda_{\min}(A)}.
\]
The conjugate transpose of $A$ is denoted by $A^\dagger$, its pseudoinverse is denoted by $A^+$. For real matrices, the transpose of $A$ is denoted by $A^\top$.

\section{Finite element setting}
\label{sec:FEM}
In the present article we focus on a particular linear system that arises from discretizing a simple PDE on a simple geometry. We are confident that the methods introduced could be extended to more difficult problems in the future. 
\subsection{Poisson equation}
In detail, we solve the Poisson equation in a domain $\Omega \in \mathbb{R}^d$ with homogeneous Dirichlet boundary condition. We make the assumption that $\Omega$ is a rectangle (if $d=2$), cuboid (if $d=3$) or, more generally, a hyperrectangle (any $d$). The source term $f$ is taken in the Lebesgue space $L^2(\Omega)$ of functions that are square integrable over $\Omega$. As is standard, we denote by $H_0^1(\Omega)$ the Sobolev space of functions in $L^2(\Omega)$ that vanish on the boundary of $\Omega$ and have square-integrable first derivatives. 

Then, the formulation of our problem is: Find $u \in H_0^1(\Omega)$ such that 
\begin{equation}
\label{eq:varf}
\int_\Omega \rho(x) \, \nabla u(x) \cdot \nabla v(x)  \mathrm d x = \int_\Omega f(x) v(x) \mathrm d x  \quad \forall v \in H_0^1(\Omega),
\end{equation}
where, $\rho$ is into the space of ${d \times d}$ symmetric positive definite matrices with measurable entries. We assume that there exist $\rho_{\min},\rho_{\max}>0$ such that
\begin{equation}
\label{eq:assumption-rho}
\rho_{\min} \eta^\top \eta \leq \eta^\top \rho(x)  \eta \leq \rho_{\max} \eta^\top \eta ~ \text{for all } \eta \in \mathbb{R}^d \text{ and almost all } x \in \Omega. 
\end{equation}
As is well-known, this is the weak formulation of the partial differential equation (PDE)
\begin{equation}
    \label{eq:pde}
\left\{
    \begin{aligned}
- \nabla \cdot \rho \nabla u &= f \quad \text{in } \Omega, \\
u &= 0 \quad \text{on } \partial \Omega,
\end{aligned}
\right.
\end{equation}
in which $\rho$ is the, possibly anisotropic, diffusion parameter. If $\rho = I_{d}$, the first line in the equation rewrites as $- \Delta u = f$.
\subsection{Finite element setting}
\label{sub:FEM}
In order to discretize the PDE, we introduce a Cartesian mesh $\mathcal{T}_L$ of $\Omega$. Throughout the article, the subscript $\cdot_L$ refers to quantities defined on the mesh $\mathcal{T}_L$. We apply the $\mathbb Q_1$ finite element method, meaning that we consider the restriction of the variational formulation \eqref{eq:varf} to a particular finite dimensional subspace $V_L$ of $H_0^1(\Omega)$. The $\mathbb Q_1$ finite element space is the space of functions that are globally continuous over $\Omega$, $d$-linear on each mesh element and that vanish on $\partial \Omega$. Formally, we set 
\[
V_L := \left\{u_L \in C^0(\bar{\Omega}) \cap H_0^1(\Omega);\,  {u_L}_{|K} \in \mathbb Q_1(K), \, \forall K \in \mathcal{T}_L \right\},
\] 
where 
\[
\mathbb Q_1(K) = \operatorname{span}\left\{
x_1^{\alpha_1} \cdots x_d^{\alpha_d}
:\alpha_i \in \{0,1\}
\right\} \text{ with } x= (x_1,\dots,x_d) \text{ for } x \in K.
\]

We let $\{ \phi_j \}_{1 \leq j \leq 
\mathcal{N}}$ be the standard nodal basis of  $V_L$, \textit{i.e.}, the set of functions that are $1$ at an interior vertex of the mesh and $0$ at all others. We then obtain the linear system of equations 
\begin{equation}
\label{eq:Axequalb}
     A x = b,
\end{equation}
where $A \in \mathbb R^{\mathcal N \times \mathcal N}$ and $b \in \mathbb R^{\mathcal N}$ are defined by
\begin{equation*}
    A_{ij} := \int_\Omega \rho \, \nabla \phi_i \cdot \nabla \phi_j \text{ for all }1 \leq i,j \leq \mathcal{N} \text{ and } b_i := \int_\Omega  f \phi_i \text{ for all } 1 \leq i \leq \mathcal{N}.
\end{equation*}
Note that the matrix $A$ is symmetric positive definite and that its order, $\mathcal N$, is the number of vertices of $\mathcal{T}_L$ that are in the interior of $\Omega$. 

\subsection{Factorized form of the linear system}

We follow the idea from \cite{Deiml_2025} to factorize $A$ into discrete gradient operators weighted by the (matrix-valued) coefficient $\rho$. We assume that $\rho$ is constant over each mesh element $K \in \mathcal{T}_L$.

We introduce the discontinuous piecewise $\mathbb Q_1$ space
\begin{equation}
\label{eq:QL}
Q_L := \left\{u_L \in L^2(\Omega);\, {u_L}_{|K} \in \mathbb Q_1(K), \ \forall K \in \mathcal{T}_L \right\},
\end{equation}
where each element $K$ carries $2^d$ local degrees of freedom. We equip $Q_L$ with an $L^2$-orthonormal basis consisting of $2^d$ functions
$\psi_{K,1}, \dots, \psi_{K,2^d}$ on each element $K \in \mathcal{T}_L$.

We next define $Q_L^d := Q_L \otimes \mathbb{R}^d$, and equip it with the basis functions $\psi_{K,k} \otimes e_s $ where $e_s$ ($s \in \{1, \dots, d\}$) is the $s$-th canonical basis vector in $\mathbb R^{d}$. 
Let also $C_L$ denote the matrix representation of the (elementwise) gradient operator
\[
\nabla : V_L \to Q_L^d.
\]

Then, as in \cite{Deiml_2025}, the stiffness matrix admits the decomposition
\begin{equation}
A = C_L^\top (D_\rho \otimes I_{2^d}) C_L,
\label{eq:decomposition_stiffness_matrix}
\end{equation}
where $D_\rho$ is a block-diagonal matrix with one block per element $K \in \mathcal{T}_L$, each block being the diffusion tensor $\rho_{|K} \in \mathbb{R}^{d \times d}$. The tensor product structure reflects the assumption that $\rho$ is constant over each element.

The factorized form of $A$ plays a crucial role in block-encoding the linear system. Before moving to the quantum framework we introduce domain decomposition preconditioners.  

\section{Domain decomposition}
\label{sec:DD}

 This section focuses on the construction of a particular domain decomposition method: the two-level Additive Schwarz preconditioner with inexact local solves. The presentation is for a simple geometry but most definitions carry over to more complex geometries. The spectrum of the Poisson problem preconditioned by additive Schwarz is bounded by Theorem~\ref{th:DDtheory} (which is well-known in the domain decomposition literature \cite{zbMATH02113718}). A split, or factorized, form of the preconditioner is proposed that is well-suited to its quantum implementation. In Section~\ref{sec:DD-adv}, the advantages of domain decomposition are discussed as well as its potential for accelerating quantum linear solvers. 

\subsection{Geometry of the domain and subdomains}
\label{sec:geom}
We introduce a partition of $\Omega$ into $N$ overlapping subdomains denoted $\Omega^{(i)}$. The subdomains $\Omega^{(i)}$ are open, mesh conforming, subsets of $\Omega$ which satisfy
\[
\bigcup\limits_{i=1}^N \Omega^{(i)} = \Omega. 
\]
Remark that, in order to satisfy the definition, it is clear that the subdomains must overlap, \textit{i.e.}, $\Omega^{(i)} \cap \Omega^{(j)} \neq 0$ for some indices $j$. It is natural to define local meshes $\mathcal{T}_L^{(i)}$ which consist of the restriction of $\mathcal{T}_L$ to the elements in $\Omega^{(i)}$ and to the vertices in $\overline{\Omega^{(i)}}$. For convenience in introducing the quantum framework, we next introduce a simple geometry (but we stress that this is not necessary for domain decomposition). We first introduce the subdomains $\Omega^{(i)}$ and then set $\Omega$ to be their union. 

We assume that each subdomain $\Omega^{(i)}$ is a unit hypercube discretized by a Cartesian mesh  $\mathcal{T}_L^{(i)}$ of size $2^{-L}$. Consequently, each subdomain contains $2^{dL}$ elements ($2^L$ in each coordinate direction). We further assume that there are $N_s$ subdomains along each direction $s= 1, \dots, d$, and that they overlap by a fixed number of mesh elements given by $2 \delta / 2^{-L}$. (This ensures that $\delta$ matches the standard overlap parameter used in the domain decomposition literature.) Figure~\ref{fig:overlapping_subdomains} shows an example of this geometry for $d=2$, $N_1 = 4$, $N_2 = 2$, $L = 3$, and $\delta = 2^{-L}$.

\begin{remark}
\label{rem:i-multiindex}
The subdomains can also be labeled by a multi-index $(i_1,\dots,i_d)$ to make the Cartesian tensor structure explicit, which will be useful in later sections. For example, subdomain $\Omega^{(7)}$ in Figure~\ref{fig:overlapping_subdomains}, is multi-indexed $(i_1, i_2) = (3,2)$. 
\end{remark}

\begin{figure}
\begin{center}
  \begin{tikzpicture}[scale=0.75]
    \path [draw, help lines, opacity=.5]  (0, 0) grid[step=0.5] (13,7);
    \draw (0, 0) rectangle (4, 4) node [pos=0.5] {$\Omega^{(1)}$};
    \draw[blue] (3, 0) rectangle (7, 4) node [pos=0.5] {$\Omega^{(2)}$};
    \draw (6, 0) rectangle (10, 4) node [pos=0.5] {$\Omega^{(3)}$};
    \draw[blue] (9, 0) rectangle (13, 4) node [pos=0.5] {$\Omega^{(4)}$};
    
    \draw[darkgreen] (0, 3) rectangle (4, 7) node [pos=0.5] {$\Omega^{(5)}$};
    \draw[purple] (3, 3) rectangle (7, 7) node [pos=0.5] {$\Omega^{(6)}$};
    \draw[darkgreen] (6, 3) rectangle (10, 7) node [pos=0.5] {$\Omega^{(7)}$};
    \draw[purple] (9, 3) rectangle (13, 7) node [pos=0.5] {$\Omega^{(8)}$};
    
\draw[<->, red, thick] (3, 1.7) -- node[above] {$2\delta$} (4, 1.7);
 \draw[<->, red, thick] (0, 4.1) -- node[above] {$1$} (4, 4.1); 
    \draw[<->, red, thick] (-0.1, 0) -- node[left] {$1$} (-0.1, 4); 
  \end{tikzpicture}
 \end{center}
  \caption{Example of 8 overlapping subdomains ($d=2$, $N_1 = 4$, $N_2 = 2$, $L = 3$, and $\delta = 2^{-L}$).}
  \label{fig:overlapping_subdomains}
\end{figure}

The global domain is then 
\[
\Omega = \prod_{s=1}^d \left[ 0, N_s - 2\delta (N_s - 1) \right]. 
\]
The global mesh $\mathcal T_L$ is the Cartesian mesh of $\Omega$ with discretization step $2^{-L}$. The finite element space $V_L$ (already defined in Section~\ref{sub:FEM}) has $\mathcal N$ degrees of freedom (\textit{i.e.}, unknowns) at the interior mesh vertices with $\mathcal N = \prod_{s=1}^d \left[2^L (N_s - 2 \delta (N_s-1)) -1\right]$ . This is also the order of the matrix $A $ in the linear system \cref{eq:Axequalb}. 

We next introduce in each subdomain a local finite element space 
\[
V_L^{(i)} := \left\{u_L \in  C^0(\overline{\Omega^{(i)}}); \, u_L = 0 \text{ on } \partial \Omega \text{ and }  {u_L}_{|K} \in \mathbb Q_1(K), \,   \forall K \in \mathcal{T}_L^{(i)} \right\}.
\] 
The dimension of each $V_L^{(i)}$ is $(2^L-1)^{d}$ (the number of inner vertices of $\Omega^{(i)}$).
There is a natural embedding of $V_L^{(i)}$ into $V_L$ since the functions in $V_L^{(i)}$ can be seen as restrictions of certain functions in $V_L$. To make things concrete, we also introduce this interpolation operator in the algebraic formalism. The degrees of freedom (interior vertices) of $\Omega$ are numbered from $1$ to $\mathcal N$. The degrees of freedom (interior vertices) of $\Omega^{(i)}$ are a subset of vertices that we can denote by $\omega_i$. The restriction matrix $R_L^{(i)} \in \mathbb{R}^{(2^L - 1)^d \times \mathcal N}$ is
\[
R_L^{(i)} := I_\mathcal N(\omega_i, :),
\]  
by which it is meant that we select all columns and only rows that are in $\omega_i$ . 
Matrix $R_L^{(i)}$ is the representation of the restriction operator $V_L \rightarrow V_L^{(i)}$. Its transpose ${R_L^{(i)}}^\top$ represents the prolongation by zero operator $V_L^{(i)} \rightarrow V_L$, which we also call the natural embedding $V_L^{(i)} \hookrightarrow V_L$.

\textit{Example: } In the one-dimensional case with 2 subdomains, $L = 2$, and overlap $\delta=1/4$, the extension matrices are
\begin{equation*}
{R_L^{(1)}}^\top = 
\begin{pmatrix}
1 & 0 & 0 \\ 
0 & 1 & 0 \\ 
0 & 0 & 1 \\ 
0 & 0 & 0 \\ 
0 & 0 & 0 \\ 
\end{pmatrix} \text{ and }
{R_L^{(2)}}^\top = 
\begin{pmatrix}
0 & 0 & 0  \\ 
0 & 0 & 0  \\ 
1 & 0 & 0  \\ 
0 & 1 & 0  \\ 
0 & 0 & 1  \\ 
\end{pmatrix}.
\end{equation*}

The second last ingredient for defining domain decomposition preconditioners is to define, for each $i$, a local solver that acts in the local subspaces $V_L^{(i)}$. In a first step, we choose the exact local solver ${A^{(i)}}^{-1}$ with  
\[
A^{(i)} := R_L^{(i)} A {R_L^{(i)}}^\top = A(\omega_i, \omega_i), 
\]
by which it is meant that we select the rows and columns of $A$ that are in $\omega_i$. Later on, these $A^{(i)}$ will be replaced by approximations.

It is to be noted that $A^{(i)} $ is also the matrix associated with the finite element discretization, in $V_L^{(i)}$, of the PDE 
\[
\left\{
\begin{aligned}
- \nabla \cdot \rho \nabla u &= f \quad \text{in } \Omega^{(i)}, \\
u &= 0 \quad \text{on } \partial \Omega^{(i)}.
\end{aligned}
\right.
\]

\subsection{Domain Decomposition preconditioners}

Having introduced the interpolation operators (${R_L^{(i)}}$, ${R_L^{(i)}}^\top$) between local and global functions, as well as the local solvers ${A^{(i)}}^{-1}$ we can next define the one level Additive Schwarz preconditioner as
\begin{equation}
\Hone := \sum_{i=1}^N {R_L^{(i)}}^\top {A^{(i)}}^{-1} R_L^{(i)}. 
\end{equation}
To say it with words, the inverse of $A$ is approximated by a sum of local inverses $ {A^{(i)}}^{-1}$ in the subdomains. This approximation is, in general, not sufficient to guarantee that the eigenvalues of the preconditioned operator $\Hone A$ are bounded independently from the number of subdomains (what is called the scalability property). For this reason, an additional correction is performed called the coarse correction. The coarse correction solves one more problem in a space called the coarse space (which is a subset of $V_L$). We choose to parametrize the coarse space by a matrix $Z$, the columns of which form a basis of the coarse space. The coarse problem is then $Z^\top A Z$, and the two-level Additive Schwarz preconditioner is defined by 
\begin{equation}
\label{eq:HASM2}
\Htwo := \sum_{i=1}^N {R_L^{(i)}}^\top {A^{(i)}}^{-1} R_L^{(i)} + Z (Z^\top A Z)^{-1} Z^\top.
\end{equation}

The preconditioned matrix $\Htwo A$ that results from preconditioning by domain decomposition has the very appealing property that its eigenvalues can be bounded independently of the mesh size (parameterized by $L$ in our notation), and, if $Z$ is chosen correctly, of the number of subdomains ($N$ in our notation). This is fundamental in classical computing. The solver of choice would indeed  be the preconditioned conjugate gradient (PCG) method, and it converges fast if the eigenvalues of the preconditioned matrix are clustered away from zero (see \cite[Sec 6.11.3 and Sec. 9.2]{zbMATH01953444} and \cite[Lem. C.10]{zbMATH02113718}). We state below a fundamental result from the domain decomposition literature. 

\begin{theorem}[Spectrum of preconditioned operator \cite{zbMATH02113718}]
\label{th:DDtheory}
Let $\lambda_{\min}$ and $\lambda_{\max}$ be the smallest and largest eigenvalues of the preconditioned operator $\Htwo A$. We denote by $N_C$ the coloring constant of the partition into subdomains, \textit{i.e.}, the smallest number of colors needed to color
the subdomains so that two overlapping subdomains never share the same color. Assume that, either, 
\begin{itemize}
\item the columns of $Z$ are the coordinates in $V_L$ of the nodal basis functions associated with the coarse mesh induced by the subdomain partition (\textit{i.e.} the standard $\mathbb Q_1$ finite element basis for the coarse mesh in which each $\Omega^{(i)}$ is an element), 
\item or, the columns of $Z$ form a partition of unity subordinate to the subdomain decomposition, with each function supported in the corresponding subdomain and satisfying
\[
\|\nabla z_i\|_{L^\infty(\Omega^{(i)})}
\lesssim
\frac{1}{\delta}.
\]
\end{itemize}
Then, it holds that
\[
\lambda_{\min}\geq C \left(1 + \frac{\operatorname{diam}(\Omega^{(i)})}{\delta} \right)^{-1}  \left[ = C \left(1 + \frac{1}{\delta} \right)^{-1}\right] \text{ and } \lambda_{\max} \leq N_C + 1 \left[ = 5 \right],
\]
where $C$ is a constant that may depend on $N_C$ and $\rho$ (through the contrast ratio $\rho_{\max}/\rho_{\min}$) but does not depend on: the mesh size (parametrized by $L$), the number $N$ of subdomains, their diameter, or the overlap $\delta$.  
\end{theorem}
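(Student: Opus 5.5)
We will follow the abstract Schwarz framework of \cite{zbMATH02113718}. Write $V_0 = \operatorname{range}(Z)$ and $V^{(i)} = \operatorname{range}({R_L^{(i)}}^\top)$, and let $P_0$, $P_i$ denote the $a$-orthogonal projections onto these spaces, where $a(u,v) = \int_\Omega \rho \nabla u\cdot\nabla v$. Since exact local and coarse solves are used, a direct computation gives $\Htwo A = P_0 + \sum_{i=1}^N P_i$. This operator is self-adjoint with respect to $a(\cdot,\cdot)$, so its extreme eigenvalues are the extreme values of the Rayleigh quotient $a(Pu,u)/a(u,u)$. We will therefore prove the two bounds through the three standard assumptions: stable decomposition, strengthened Cauchy--Schwarz (handled through the coloring constant), and local stability, which is trivial here with $\omega = 1$.

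\textbf{Upper bound.} Group the subdomains into $N_C$ color classes. Within one color the spaces $V^{(i)}$ have disjoint supports, so they are mutually $a$-orthogonal. Hence the sum of the $P_i$ over one color is itself an orthogonal projection and has norm at most $1$. Adding the $N_C$ color sums and $P_0$ gives $\lambda_{\max} \le N_C + 1$.

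\textbf{Lower bound.} We use the lemma of P.~L.~Lions: if every $u\in V_L$ admits a decomposition $u = Zu_0 + \sum_i {R_L^{(i)}}^\top u_i$ with
\[
a(u_0,u_0) + \sum_i a(u_i,u_i) \le C_0^2\, a(u,u),
\]
then $\lambda_{\min} \ge C_0^{-2}$. The decomposition is built as follows.
\begin{enumerate}
\item Take $u_0 = \Pi_0 u$, where $\Pi_0$ is a stable coarse quasi-interpolant. In the first case of the statement this is a Cl\'ement-type operator onto the coarse $\mathbb Q_1$ space. In the second case it is $\Pi_0 u = \sum_j \bar u_j z_j$, with $\bar u_j$ the average of $u$ over $\Omega^{(j)}$ (modified near $\partial\Omega$ to respect the boundary condition).
\item Take a piecewise-$\mathbb Q_1$ partition of unity $\theta_i$ subordinate to the overlapping cover, with $\|\nabla\theta_i\|_\infty \lesssim 1/\delta$, and set $u_i = I_h(\theta_i (u - u_0))$, where $I_h$ is the nodal interpolant. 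Then $\sum_i u_i = u - u_0$ exactly, and each $u_i$ lies in $V^{(i)}$.
\item Bound the coarse part: $|u_0|_{H^1} \lesssim |u|_{H^1}$, and on each coarse patch $\|u-u_0\|_{L^2} \lesssim H |u|_{H^1}$. Both follow from Poincar\'e inequalities on the subdomains.
\item Bound the local parts. Write
\[
|I_h(\theta_i w)|_{H^1(\Omega^{(i)})}^2 \lesssim |w|_{H^1(\Omega^{(i)})}^2 + \delta^{-2}\|w\|_{L^2(\Omega^{(i)}_\delta)}^2,
\]
where $w = u - u_0$ and $\Omega^{(i)}_\delta$ is the strip of width $\delta$ along $\partial\Omega^{(i)}$ where $\nabla\theta_i\neq0$.
\end{enumerate}

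The main obstacle is the strip term in step 4: bounding $\delta^{-2}\|w\|^2_{L^2(\text{strip})}$ by $(1+H/\delta)\,|w|^2_{H^1}$ rather than by the cruder $(H/\delta)^2$. For this we use the thin-strip lemma,
\[
\|w\|^2_{L^2(\Omega_\delta)} \lesssim \delta^2\Big(1+\tfrac{H}{\delta}\Big)|w|^2_{H^1} + \tfrac{\delta}{H}\|w\|^2_{L^2},
\]
proved by a one-dimensional trace and Poincar\'e argument in the normal direction on each face of the hypercube. Combined with step 3 and finite overlap, which makes the sums over $i$ cost only a factor $N_C$, this gives $C_0^2 \lesssim N_C(1+H/\delta)$.

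Throughout, the coefficient $\rho$ is handled through the bounds \cref{eq:assumption-rho}: equivalence of $a(\cdot,\cdot)$ with the $H^1$ seminorm introduces the factor $\rho_{\max}/\rho_{\min}$ into $C$. Setting $H = \operatorname{diam}(\Omega^{(i)}) = 1$ and $N_C = 4$ for the geometry of Section~\ref{sec:geom} gives the bracketed forms of the two bounds in the statement.
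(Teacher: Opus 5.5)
Your route is the paper's. The paper proves $\lambda_{\max}\le N_C+1$ by the same coloring/projection argument, and gets the lower bound by citing Toselli--Widlund: Th.~3.13 for the standard coarse space and Lem.~3.24 for the partition-of-unity one. Those proofs are the Lions-lemma, partition-of-unity and thin-strip argument you reconstruct, with $\rho$ entering through the contrast as you say. The upper bound and the first coarse space are fine.

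\textbf{The gap: Step~3 in the second (partition-of-unity) case.} With $u_0=\sum_j\bar u_j z_j$ and $\|\nabla z_j\|_\infty\sim 1/\delta$, the bound $|u_0|_{H^1}\lesssim|u|_{H^1}$ does not hold uniformly in $H/\delta$. In 1D, take $u$ equal to $x$ on a run of interior subdomains. Then $u_0$ is constant on the non-overlapped parts and climbs by $\bar u_{j+1}-\bar u_j\approx H$ across each overlap of width $2\delta$. So $|u_0|^2_{H^1}\approx H^2/(2\delta)$ per subdomain, against $|u|^2_{H^1}\approx H$.

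What is true is $|u_0|^2_{H^1}\lesssim(1+H/\delta)|u|^2_{H^1}$:
\begin{itemize}
\item use $\sum_j\nabla z_j=0$ to write $\nabla u_0=\sum_j(\bar u_j-\bar u_k)\nabla z_j$;
\item bound $|\bar u_j-\bar u_k|^2\lesssim H^{2-d}|u|^2_{H^1}$ on unions of neighbouring subdomains;
\item integrate over overlaps of measure $\sim\delta H^{d-1}$.
\end{itemize}
This is harmless for the coarse term itself. But in Step~4 you feed $w=u-u_0$ into the thin-strip lemma, and its first term $(1+H/\delta)|w|^2_{H^1}$ then becomes $(1+H/\delta)^2|u|^2_{H^1}$. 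As written, your argument only yields $\lambda_{\min}\gtrsim(1+H/\delta)^{-2}$ for the second coarse space.

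\textbf{The repair: do not apply the thin-strip lemma to $w$ as a whole.}
\begin{itemize}
\item Where $\sum_j z_j=1$ you have, pointwise, $w=\sum_j z_j(u-\bar u_j)$.
\item At each point at most $N_C$ of the $z_j$ are nonzero, since subdomains sharing an interior point pairwise overlap and so carry distinct colors.
\item In this Cartesian geometry, $\Omega^{(i)}\cap\Omega^{(j)}$ lies within $2\delta$ of $\partial\Omega^{(j)}$ whenever $j\neq i$.
\item Apply the thin-strip lemma on each $\Omega^{(j)}$ to $u-\bar u_j$. Its seminorm is that of $u$, and its $L^2$ norm is $\lesssim H|u|_{H^1(\Omega^{(j)})}$ by Poincar\'e (Friedrichs when $\bar u_j=0$ near $\partial\Omega$).
\end{itemize}
This gives $\delta^{-2}\|w\|^2_{L^2(\text{strip of }\Omega^{(i)})}\lesssim(1+H/\delta)\sum_j|u|^2_{H^1(\Omega^{(j)})}$, with the sum over subdomains meeting $\Omega^{(i)}$ and a constant depending on $N_C$. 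The remaining contributions $|w|^2_{H^1(\Omega^{(i)})}$ and $|u_0|^2_{H^1}$ each cost only an additive factor $(1+H/\delta)$. Hence $C_0^2\lesssim(1+H/\delta)$ is recovered.

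A cleaner way to organize the same idea is to take $\theta_i=z_i$ and $u_i=I_h\bigl(z_i(u-\bar u_i)\bigr)$. This still sums to $u-u_0$ and puts $u-\bar u_i$ directly into the thin-strip lemma.
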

We have injected in brackets the formula that is particular to our choice of geometry. 
\begin{proof}
The lower bounds on the eigenvalues correspond to \cite[Th. 3.13]{zbMATH02113718} and \cite[Lem. 3.24]{zbMATH02113718}. We refer to there for the exact definitions of the coarse spaces.  It is to be noted that these results are originally for triangular meshes and piecewise linear finite element discretizations in the cited theorems. However, they also carry over to Cartesian meshes and our choice of finite elements. Indeed, the proofs refer back to results that are presented in \cite[Sec. App B1 and App B2]{zbMATH02113718} and whose original source is \cite[Sec. 3.4.1 and Sec 6.3.2]{zbMATH00635667}. Another difference is that the cited results are for Poisson with a constant $\rho$ equal to identity. This is accounted for by letting $C$ depend on $\rho$ under the assumption that $\rho$ satisfies \eqref{eq:assumption-rho}.

The upper bound on the eigenvalues is standard \cite[Lem 3.11]{zbMATH00635667}. The argument is that $\Htwo A$ can be rewritten as a sum of $N_C + 1$ $A$-orthogonal projections. 
\end{proof}

\begin{remark}[Other domain decomposition preconditioners]
A possible improvement is to apply the coarse correction multiplicatively by defining the hybrid Schwarz preconditioner
\begin{equation}
    \Hhyb := (I - P^{(0)}) \Hone (I - {P^{(0)}}^\top) +  Z (Z^\top A Z)^{-1} Z^\top,
\end{equation}
where $P^{(0)} = Z (Z^\top A Z)^{-1} Z^\top A$ is the $A$-orthogonal projection operator onto the coarse space. The spectrum of $\Hhyb A$ also satisfies the result from Theorem~\ref{th:DDtheory} and the upper bound can be improved to $N_C$.  

Another improvement allows to eliminate the dependency of the spectral bounds on $\rho$ in the case where $\rho$ is a constant scalar in each (non-overlapping) subdomain \cite[Th. 5]{zbMATH01592007}. This is done by considering the balanced Neumann-Neumann method \cite[Sec. 6.2]{zbMATH00635667}. Two major changes are that the local solves are for the Neumann problems on a non-overlapping partition, and that a partition of unity is introduced that depends on the distribution of $\rho$. We have not yet considered these improvements in the present work but deem them promising for future work.  
\end{remark}

The local solves ${A^{(i)}}^{-1}$ in the definition of the domain decomposition preconditioner can be replaced by approximations while retaining spectral bounds.  

\begin{theorem}[Inexact local solves]
We consider the two-level Additive Schwarz preconditioner 
$\Htwo = \sum_{i=1}^N {R_L^{(i)}}^\top {A^{(i)}}^{-1} R_L^{(i)} + Z (Z^\top A Z)^{-1} Z^\top$ defined by \eqref{eq:HASM2}. 
We also consider 
\begin{equation}
\Htwoap := \sum_{i=1}^N {R_L^{(i)}}^\top H^{(i)} R_L^{(i)} + Z (Z^\top A Z)^{-1} Z^\top,
\end{equation}
in which each matrix $H^{(i)}$ is symmetric positive definite. We assume that 

\begin{itemize}
\item the eigenvalues of $\Htwo A$ are in the interval $[\lambda_{\min},\lambda_{\max}]$, 
\item and, that the eigenvalues of $H^{(i)} A^{(i)}$ are in the interval $[\mu_{\min}, \mu_{\max}]$ for every index $i$. 
\end{itemize}
Then, the eigenvalues of the preconditioned system $\Htwoap A$ are in the interval \\ $[\lambda_{\min} \min (1, \mu_{\min}), \lambda_{\max} \max (1, \mu_{\max})]$.
\label{th:th_kappa_DD_BPX}
\end{theorem}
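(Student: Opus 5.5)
The plan is to compare the two preconditioners directly as symmetric positive definite matrices, in the Loewner order. Both $\Htwo A$ and $\Htwoap A$ are similar to the symmetric matrices $A^{1/2}\Htwo A^{1/2}$ and $A^{1/2}\Htwoap A^{1/2}$. Their eigenvalues are therefore real and given by the Rayleigh quotients
\[
\frac{y^\top \Htwo y}{y^\top A^{-1} y}
\quad\text{and}\quad
\frac{y^\top \Htwoap y}{y^\top A^{-1} y},
\]
taken over $y\neq 0$. It is thus enough to prove the two-sided matrix inequality
\[
\min(1,\mu_{\min})\,\Htwo \;\preceq\; \Htwoap \;\preceq\; \max(1,\mu_{\max})\,\Htwo ,
\]
and then combine it with the assumed bounds $\lambda_{\min}\,y^\top A^{-1}y \le y^\top\Htwo y \le \lambda_{\max}\, y^\top A^{-1} y$.

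The first step is a local comparison. Since $H^{(i)}$ and $A^{(i)}$ are SPD, the eigenvalues of $H^{(i)}A^{(i)}$ coincide with those of ${A^{(i)}}^{1/2}H^{(i)}{A^{(i)}}^{1/2}$. The assumption $\sigma(H^{(i)}A^{(i)})\subset[\mu_{\min},\mu_{\max}]$ therefore gives
\[
\mu_{\min} I \preceq {A^{(i)}}^{1/2}H^{(i)}{A^{(i)}}^{1/2}\preceq \mu_{\max} I .
\]
Conjugating by ${A^{(i)}}^{-1/2}$ yields $\mu_{\min}{A^{(i)}}^{-1}\preceq H^{(i)}\preceq \mu_{\max}{A^{(i)}}^{-1}$. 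Congruence by $R_L^{(i)}$ preserves the Loewner order, and so does summation over $i$. Writing $S=\sum_i {R_L^{(i)}}^\top {A^{(i)}}^{-1}R_L^{(i)}$ and $\widetilde S=\sum_i {R_L^{(i)}}^\top H^{(i)}R_L^{(i)}$, this gives $\mu_{\min} S\preceq \widetilde S\preceq \mu_{\max} S$.

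The second step adds the common coarse term $P:=Z(Z^\top AZ)^{-1}Z^\top\succeq 0$. For the upper bound, $\widetilde S + P \preceq \mu_{\max}S + P \preceq \max(1,\mu_{\max})(S+P)$, because $S,P\succeq 0$. The lower bound follows symmetrically: $\widetilde S+P\succeq \mu_{\min}S+P\succeq \min(1,\mu_{\min})(S+P)$. Inserting these inequalities into the Rayleigh quotients gives the interval $[\lambda_{\min}\min(1,\mu_{\min}),\ \lambda_{\max}\max(1,\mu_{\max})]$ for the spectrum of $\Htwoap A$.

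I do not expect a serious obstacle; the argument is a chain of Loewner-order manipulations. The one point needing care is the justification that eigenvalues of a product of two SPD matrices are characterized by a generalized Rayleigh quotient. This is used once locally, with $H^{(i)}A^{(i)}$, and once globally, with $\Htwo A$ and $\Htwoap A$. It rests on the similarity $HA\sim A^{1/2}HA^{1/2}$. One must also note that $\Htwoap$ is SPD, which follows since $\Htwoap\succeq \min(1,\mu_{\min})\Htwo$ and $\Htwo$ is SPD (its spectrum against $A^{-1}$ is bounded below by $\lambda_{\min}>0$). This guarantees that all quotients are well defined and that the min/max characterizations are valid.
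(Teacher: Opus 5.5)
Your proposal is correct and is essentially the paper's proof. The paper also rewrites both spectral assumptions as Rayleigh-quotient bounds (its quotient $x^\top A\Htwo A x / x^\top A x$ is yours with $y = Ax$) and then combines them, and your Loewner-order chain (local comparison, congruence by $R_L^{(i)}$, summation, and absorbing the common coarse term through $\max(1,\mu_{\max})$ and $\min(1,\mu_{\min})$) spells out the ``combining'' step that the paper leaves implicit; the positive definiteness of $\Htwoap$ that you check at the end is not actually needed, since symmetry of $\Htwoap$ and positive definiteness of $A$ already suffice for the Rayleigh characterization.
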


\begin{proof}
The assumptions on the spectra of  $\Htwo A$ and  $H^{(i)} A^{(i)}$ can be rewritten as bounds of well-chosen Rayleigh quotient: 
\[
\lambda_{\min}\, x^\top A x \leq  x^\top (A \Htwo A) x \leq  \lambda_{\max}\, x^\top  A x, \quad \forall x, 
\]
and 
\[
\mu_{\min}\, {x^{(i)} }^\top  {A^{(i)}}^{-1} x^{(i)} \leq    {x^{(i) }}^\top  {H^{(i)}}  x^{(i)}  \leq  \mu_{\max}\, {x^{(i) }}^\top  {A^{(i)}}^{-1} x^{(i)},  \quad \forall x^{(i)}. 
\]
Combining these gives the result in the form\\ $\lambda_{\min} \min (1, \mu_{\min})\, x^\top A x \leq  x^\top A \Htwoap A x \leq    \lambda_{\max}\, \max (1, \mu_{\max})x^\top  A x, \quad \forall x $ 

\end{proof}
We note that we could also consider an inexact coarse solve $(Z^\top A Z)^{-1}$ with no extra difficulty.

All above results are for the spectrum of the preconditioned operator. Since the preconditioned matrix is non-symmetric, these do not imply bounds for the condition number of the preconditioned matrix\footnote{Note that in the DD literature, it is often said that the condition number is bounded. However this refers to the condition number for the norm induced by $A$, not the spectral condition number.}. In order to obtain condition number bounds, we use again an idea from \cite{Deiml_2025} and split the preconditioner to symmetrize the preconditioned operator.  

\subsection{Split form of the preconditioned operator}
\label{sec:splitprec}

Let's assume that, for any $i=1,\dots,N$, the inexact local solvers $H^{(i)}$ can be factorized as $H^{(i)} =: F^{(i)} {F^{(i)}}^\top$ and that the coarse solve can be factorized as $ (Z^\top A Z)^{-1} =: F^{(0)} {F^{(0)}}^\top $. We do not require the matrices $F^{(i)}$ and $F^{(0)}$ to be square. Then $\Htwoap$ rewrites
\begin{equation}
\Htwoap = \sum_{i=1}^N {R_L^{(i)}}^\top F^{(i)} {F^{(i)}}^\top R_L^{(i)} + Z F^{(0)} {F^{(0)}}^\top Z^\top = \tilde{F} \tilde{F}^\top,
\label{eq:decomposition_preconditioner}
\end{equation}
with
\begin{equation}
    \tilde{F} := \left({R_L^{(1)}}^\top F^{(1)} | ~\dots~| {R_L^{(i)}}^\top F^{(i)} | \dots~ | {R_L^{(N)}}^\top F^{(N)} | Z F^{(0)} \right).
\end{equation}

By this notation, we mean that we have concatenated all matrices of the form ${R_L^{(i)}}^\top F^{(i)}$ horizontally with the matrix $Z F^{(0)}$. The matrix $\tilde{F}$ is a rectangular matrix. It has $\mathcal N$ rows.

Based on the decomposition of the stiffness matrix \cref{eq:decomposition_stiffness_matrix} and the decomposition of the preconditioner \cref{eq:decomposition_preconditioner}, we arrive at a symmetric preconditioned system 
\begin{equation}
    \tilde{F}^\top C_L^\top (D_{\rho} \otimes Id_{2^d}) C_L \tilde{F} y = \tilde{F}^\top b , \text{ with } x=\tilde{F} y.
    \label{eq:prec_system}
\end{equation}
This is the system that is solved \textit{in fine} by our methodology. The symmetric preconditioned operator is $\tilde{F}^\top C_L^\top (D_{\rho} \otimes Id_{2^d}) C_L \tilde{F}$. Its order is larger than the order $\mathcal N$ of the original preconditioned matrix $\Htwoap A$. Its rank is $\mathcal N$. The non-zero eigenvalues of $\Tilde{F}^\top C_L^\top (D_{\rho} \otimes Id_{2^d}) C_L \tilde{F}$ are the eigenvalues of $\Htwoap A$ so that 
\[
\kappa(\tilde{F}^\top C_L^\top (D_{\rho} \otimes Id_{2^d}) C_L \tilde{F}) = \frac{\lambda_{\max}(\Htwoap A)}{\lambda_{\min}(\Htwoap A)},
\]
where $\lambda_{\max}(\Htwoap A)$ and $\lambda_{\min}(\Htwoap A)$ are the extreme eigenvalues of $\Htwoap A$. These can be bounded by Theorems~\ref{th:DDtheory} and~\ref{th:th_kappa_DD_BPX}. 
The solution $y$ to \eqref{eq:prec_system} is defined only up to an element in the kernel of $\tilde{F}$ but this does not modify the corresponding solution $x = \tilde{F} y$ of the original system.

In Section~\ref{sec:quantum}, a block-encoding of the matrix $\tilde{F}^\top C_L^\top (D_{\rho} \otimes Id_{2^d}) C_L \tilde{F}$ is proposed. Because the matrices $C_L$ and $\tilde{F}$ are not well-conditioned it is chosen to avoid implementing it as the product of block-encodings. Instead, the product 
\[
C_L \tilde{F} = \left(C_L {R_L^{(1)}}^\top F^{(1)} | ~\dots~| C_L{R_L^{(i)}}^\top F^{(i)} | \dots~ | C_L{R_L^{(N)}}^\top F^{(N)} | C_L Z F^{(0)} \right) 
\]
is reformulated so that each of its blocks is block-encoded at once. 

For each $i = 1, \dots, N$ the local discontinuous Galerkin space is introduced as 
\[
Q_L^{(i)} := \left\{u_L^{(i)} \in L^2(\Omega^{(i)});\, {u_L^{(i)}}_{|K} \in \mathbb Q_1(K), \ \forall K \in \mathcal{T}_L^{(i)} \right\},
\]
\textit{i.e.}, the local counterpart of $Q_L$ from equation~\eqref{eq:QL}. The vector-valued space ${Q_L^{(i)}}^d := Q_L^{(i)} \otimes \mathbb{R}^d$ is also introduced. These spaces are equipped, respectively, with with an $L^2$-orthonormal basis consisting of $2^d$ functions
$\psi_{K,1}, \dots, \psi_{K,2^d}$ on each element $K \in \mathcal{T}_L^{(i)}$, and with the basis functions $\psi_{K,k} \otimes e_s $ where $e_s$ is the $s$-th canonical basis vector in $\mathbb R^{d}$.  We also let $C_L^{(i)}$ denote the matrix representation of the (elementwise) gradient operator
\[
\nabla : V_L^{(i)} \to {Q_L^{(i)}}^d.
\]
There is a natural embedding of ${Q_L^{(i)}}^d$ into $Q_L^{d}$. It induces an extension operator from ${Q_L^{(i)}}^d$ into $Q_L^{d}$ that is represented by a matrix which we denote by ${\mathcal{R}_L^{(i)}}^\top$. Each term $C_L {R_L^{(i)}}^\top$ can then be rewritten as 
\begin{equation}
C_L {R_L^{(i)}}^\top = {\mathcal{R}_L^{(i)}}^\top C_L^{(i)},
\label{eq:switch_gradient}
\end{equation}
and both are equal to $C_L(:,\omega_i)$, \textit{i.e.}, the selection of columns in $C_L$ that corresponds to degrees of freedom of $V_L$ that are in the interior of $\Omega^{(i)}$. This can be verified by considering the operators represented by these matrices: for a function in $ V_L^{(i)}$, it is equivalent to first prolongate it by zero to $\Omega$ and then compute the element-wise (discontinuous) gradient, or to first compute the element-wise (discontinuous) gradient and then prolongate it by zero to $\Omega$ as summarized in the diagram below 
\[
\begin{tikzcd}[row sep=normal, column sep=huge]
V_L^{(i)} \arrow[r,hook, "{ {R_L^{(i)}}^\top }"] \arrow[d, "C_L^{(i)}"'] & V_L \arrow[d, "C_L"] \\
{Q_L^{(i)}}^d \arrow[r,hook, "{ {\mathcal{R}_L^{(i)}}^\top }"'] & Q_L^d
\end{tikzcd}
\]

We arrive at
\begin{equation}
C_L \tilde{F} = \left( {\mathcal{R}_L^{(1)}}^\top C_L^{(1)} F^{(1)}  |~\dots~| {\mathcal{R}_L^{(i)}}^\top C_L^{(i)} F^{(i)} | \dots~| {\mathcal{R}_L^{(N)}}^\top C_L^{(N)} F^{(N)} |C_L Z F^{(0)} \right),
\label{eq:precond_gradient}
\end{equation}
and propose to block-encode this matrix in the next section, as a step towards block-encoding $\tilde{F}^\top C_L^\top (D_{\rho} \otimes Id_{2^d}) C_L \tilde{F}$ and proposing a QLSS for  \eqref{eq:prec_system}. We have not transformed the term $C_L Z F^{(0)}$.

\subsection{Advantages of domain decomposition}
\label{sec:DD-adv}
Now that we have introduced domain decomposition, also known as DD, we present in more detail some of its strengths that could carry over to the quantum formalism.

\begin{itemize}
    \item As shown in Theorem~\ref{th:DDtheory}, preconditioning a linear system by domain decomposition allows to improve significantly the eigenvalue distribution and to make it independent of some parameters such as the mesh size. Theorem~\ref{th:DDtheory} is only one example of such a result and we want to stress that domain decomposition preconditioners have been developed for a very wide range of PDEs. For this reason, the feasibility of quantum domain decomposition that is achieved in this article paves the way to solving many more problems than the Poisson problem on a hyperrectangle. 
    \item Another advantage of domain decomposition is that it can be viewed as an abstract framework into which various local and coarse solvers can be plugged. This is by now quite standard in classical computing (in particular \textit{via}, the PETSc library \cite{petsc-user-ref}) and such versatility could be carried over to quantum implementations. 
    \item One particular strength of domain decomposition is to address the solution of PDEs with heterogeneous coefficients. The level of difficulty depends on whether the heterogeneities are across the subdomains boundaries, inside the subdomains or whether they cross the boundaries. A good choice of coarse space is instrumental to ensure convergence with respect to the coefficient distribution \cite{zbMATH05172548,zbMATH01592007,zbMATH05651315,spillane_geneo_2025,zbMATH06303461,zbMATH02113718}. 
    \item The coarse space formalism mentioned above may turn out not to be the best suited for quantum implementation. However, it remains from the corresponding analysis that only a small number of eigenvalues are isolated and responsible for an unfavourable condition number. These could be filtered away by other techniques. 
    \item Domain decomposition is also a suitable framework for addressing multiphysics formulations. 
    \item Although it is not the point of view adopted in this article, domain decomposition algorithms could be useful to parallelize computations over multiple quantum processing units. 
    \item A recent contribution by \cite{busaleh2026mitigatingbarrenplateausdomain} is to alleviate Barren Plateau-like effects in variational quantum algorithms by replacing a global cost function by localized ones by domain decomposition techniques. 
\item A final group of ideas concerns the reformulation of $Ax=b$ through substructuring \cite[Chap. 4, 5, 6]{zbMATH02113718}. After partitioning the degrees of freedom into interior and overlapped variables $x = (x_I, x_\Gamma)$, the interior unknowns can be eliminated via the local subdomain operators $A^{(i)}$. The global system can then be reduced to an equivalent Schur complement problem posed only on the interface variables:
\[
S x_\Gamma = b_\Gamma.
\]
In this formulation, the unknown that must be represented or accessed by a quantum algorithm is no longer the full vector $x$, but only its interface component $x_\Gamma$. The interior degrees of freedom can be reconstructed classically through independent local subdomain solves, so that the quantum state preparation is restricted to a reduced interface space.
\item Elimination of the interior degrees of freedom also changes the structure of the data that must be encoded in quantum routines for linear system solving. Indeed, after static condensation, the effective right-hand side for the reduced system is supported only on the overlapping degrees of freedom. As a consequence, both the state preparation for $b$ and the representation of residuals in iterative schemes can be restricted to a smaller overlap-supported set of components. This suggests a reduction in the number of amplitudes required to encode problem data for quantum linear algebra subroutines.
\item Finally, this structure could be exploited within the hybrid QLSS proposed in \cite{Koska:2025ucz}. The algorithm is an iterative refinement strategy inspired by mixed-precision numerical linear algebra techniques.  A first quantum solution is computed using QSVT in low precision, and then refined in higher precision until a satisfactory accuracy is achieved. Since the residual driving the refinement inherits the overlap-supported structure induced by domain decomposition, the QSVT correction step can be reformulated to act on reduced interface residuals rather than full-system vectors. This suggests that each quantum linear system call within the iterative refinement loop may be applied to lower-dimensional data, potentially reducing quantum resources per iteration while preserving the convergence properties of the original scheme.
\end{itemize}

\subsection{Numerical illustration of domain decomposition}
The purpose of this section is to illustrate the domain decomposition knowledge that we have introduced, and to test the proposed preconditioner before carrying it over to the quantum formalism. We consider the two-dimensional Poisson problem ($d=2$) with $f=1$ and $\rho=I_d$ over the whole domain $\Omega$. The reported results are for the ratio between the largest and smallest eigenvalue of the (un)preconditioned operator. This is also the condition number of the split preconditioned operator so when we speak of condition number in this section we refer to ${\lambda_{\max}}/{\lambda_{\min}}$. 

In \cref{tab:classical_dd_partition}, the number of subdomains varies in both directions. The mesh size is $h=2^{-4}$ (\textit{i.e.} L = 4), and the overlap is $\delta = 2^{-4}$. Two-level Additive Schwarz preconditioners $\Htwo$ and $\Htwoap$ with exact and inexact local solves are compared to two-level hybrid Schwarz preconditioners $\Hhyb$ and $\Hhybap$  with exact and inexact local solves. For the inexact local solves, we replace ${A^{(i)}}^{-1}$ by the standard BPX preconditioner \cite{zbMATH04197323}. Its definition is deferred to \cref{sec:quantumDDBPX}. We first observe that all preconditioners drastically improve the condition number: from $1499$ to at most $76$ when there are $6 \times 6$ subdomains.  In agreement with Theorem~\ref{th:DDtheory}, the condition numbers of the preconditioned operators do not depend significantly on the number of subdomains. This is particularly true for the hybrid two-level preconditioner $\Hhyb$ with an almost constant condition number close to $25$. For $\Htwo$, the increase in condition number does not contradict the theory, it indicates that the asymptotic regime has not been reached. The preconditioners with inexact local solves perform only slightly worse than their exact counterparts which justifies their use.

\begin{table}[htbp]
\caption{Ratio of the largest and smallest eigenvalues of the (un)preconditioned matrix. The number of subdomains varies in both directions. The mesh size and overlap are fixed $h=\delta=2^{-4}$.}
\label{tab:classical_dd_partition}
\begin{center}
\begin{tabular}{| c |c | c | c | c | c |}
 \hline
 Partition & \multicolumn{5}{|c|}{${\lambda_{\max}}/{\lambda_{\min}}$} \\[0.5ex] \hline
 & & \multicolumn{2}{|c|}{Exact local solves} & \multicolumn{2}{|c|}{Inexact local solves} \\[0.5ex] \hline
 & unpreconditioned & $\Htwo$ & $\Hhyb$& $\Htwoap$  & $\Hhybap$  \\ [0.5ex]
 \hline
 $N = 3\times3$ & $392.1$ & $34.5$ & $24.9$ &  $44.2$ &$27.5$  \\
 \hline
$N = 4\times 4$ & $681.5$ & $40.1$ &$25.6$ & $57.9$ &  $28.6$ \\
 \hline
$N = 5\times 5$ & $1050.3$ & $43.2$& $26.0$ & $68.3$  & $29.1$ \\
 \hline
$N = 6\times 6$ & $1498.6$ & $45.1$& $26.2$ & $75.9$  & $29.4$ \\
 \hline
\end{tabular}
\end{center}
\end{table}

We consider a banded domain for \cref{tab:classical_dd_discretization} and \cref{tab:classical_dd_delta}: subdomains are only added along direction $1$ (\textit{i.e.} $N_2=1$). In this case, a classical result in domain decomposition is that the coarse correction is not necessary because all subdomains are connected through the Dirichlet boundary condition. \cref{tab:classical_dd_discretization} illustrates the influence of the mesh size for the one-level Additive Schwarz preconditioners $H_1$ (exact) and $\tilde H_1$ (inexact). The overlap is kept constant at $\delta = 2^{-3}$ and the number of subdomains is $N_1=8$. Using exact local solves, the eigenvalue ratio does not depend on the mesh size. This is in agreement with Theorem~\ref{th:DDtheory}. In the case of inexact local solves, the eigenvalue ratio slightly increases from $4.2$ to $8.5$ when the mesh size decreases from $2^{-3}$ to $2^{-5}$.

\begin{table}[htbp]
\caption{Ratio of the largest and smallest eigenvalues of the (un)preconditioned matrix. The mesh size varies from $2^{-3}$ to $2^{-5}$. The number of subdomains is $N_1=8$ and the overlap is $\delta=2^{-3}$.}\label{tab:classical_dd_discretization}
\begin{center}
\begin{tabular}{| c |c | c | c |}
 \hline
 Discretization & \multicolumn{3}{|c|}{${\lambda_{\max}}/{\lambda_{\min}}$} \\[0.5ex] \hline
 & unpreconditioned & $\Hone$ & $\Honeap$ \\ [0.5ex] \hline
$L=3$ & 25.3 & 4.1 & 4.2 \\ \hline
$L=4$ & 101.2 & 4.2 & 6.9 \\ \hline
$L=5$ & 404.7 & 4.2 & 8.5 \\\hline
\end{tabular}
\end{center}
\end{table}

In \cref{tab:classical_dd_delta}, we study the influence of the overlap $\delta$ for the one-level Additive Schwarz preconditioners. The number of subdomains and the mesh size are kept constant ($N_1=8$ and $h=2^{-5}$ corresponding to $L=5$). For both the exact and inexact local solves, the eigenvalue ratio decreases when the overlap increases. Once more, this corresponds to the prediction of Theorem~\ref{th:DDtheory}.

\begin{table}[htbp]
\caption{Ratio of the largest and smallest eigenvalues of the (un)preconditioned matrix. The overlap varies. The number of subdomains is $N_1=8$ and the mesh size is $h=2^{-5}$.}\label{tab:classical_dd_delta}
\begin{center}
\begin{tabular}{| c |c | c | c |}
 \hline
 Overlap & \multicolumn{3}{|c|}{${\lambda_{\max}}/{\lambda_{\min}}$} \\[0.5ex] \hline
 & unpreconditioned & $\Hone$ & $\Honeap$ \\ [0.5ex] \hline
$\delta=2^{-5}$ & 407.88 & 12.24 & 19.53 \\ \hline
$\delta=2^{-4} $ & 406.99 & 6.79 & 12.93 \\ \hline
$\delta=2^{-3}$ & 404.65 & 4.18 & 8.46 \\\hline
$\delta=2^{-2}$ & 395.48 & 3.20 & 7.71 \\\hline
\end{tabular}
\end{center}
\end{table}

\section{Quantum algorithm}
\label{sec:quantum}
We first introduce the block-encoding framework and state the complexity of a particular quantum linear system solver. Then, we discuss the block-encoding of the symmetric preconditioned matrix $\tilde{F}^\top A \tilde{F}$, which was introduced in the previous section by symmetrizing $\Htwoap A$. We also discuss the implementation of the block-encoding and the cost of inverting the linear system (in the quantum sense). 

\subsection{Preliminaries}
The block-encoding framework allows to embed a general matrix inside a larger unitary matrix \cite{Gily_n_2019}. We use a general definition \cite{Gily_n_2019, Deiml_2025} also known as projected unitary encoding.
\begin{definition}[Block-encoding -- adapted from{\cite[Definition 3.2 and Definition 3.1]{Deiml_2025}}]
Let $M_1,M_2,p \in \mathbb{N}$ such that $M_1, M_2 \leq P:=2^p$. Let $\AA \in \mathbb{C}^{M_1\times M_2}$ and $\alpha \geq \| \AA \|$. Let $\Pi_{M_1}:\mathbb{C}^{P} \rightarrow \mathbb{C}^{M_1}$ and $\Pi_{M_2}:\mathbb{C}^{P} \rightarrow \mathbb{C}^{M_2}$ be projections\footnote{In this context a projection is a matrix with orthonormal rows.} implemented using $C_{\Pi_{M_1}}NOT$ and $C_{\Pi_{M_2}}NOT$ gates. The $C_{\Pi_{M_1/M_2}}NOT$ gate performs the operation
\begin{equation*}
    \Pi_{M_1/M_2}^\dag\Pi_{M_1/M_2} \otimes X + (I-\Pi_{M_1/M_2}^\dag\Pi_{M_1/M_2} ) \otimes I.
\end{equation*}

For $U_{\AA} \in \mathbb{C}^{P \times P}$, $\left( \alpha, U_{\AA}, C_{\Pi_{M_1}}NOT, C_{\Pi_{M_2}}NOT \right)$ is a block-encoding of $\AA$ if 
\begin{equation*}
    \AA = \alpha \Pi_{M_1} U_\AA \Pi_{M_2}^{\dag}.
\end{equation*}
The constant $\alpha$ is the normalization factor. It will be denoted by ${\alpha}(U_{\AA})$ to make explicit the block-encoding to which it corresponds. We also define the subnormalization factor $\tilde{\alpha}(U_{\AA}) := \frac{\alpha(U_{\AA})}{\| \AA \|}$.
\end{definition}

 Considering a general, well-posed, system of the form $\AA x = b$, quantum linear system solvers output a quantum state close to $\ket{x} \propto \AA^{+} \ket{b}$. We recall the complexity of a QSVT-based QLSS from  {\cite[Theorem 33]{PhysRevA.104.032422}} and {\cite[Theorem 19.3]{lin2026lecturenotes}}. 
\begin{lemma}[QLSS based on QSVT]
Let $\left( \alpha, U_\AA, C_{\Pi_{M_1}}NOT, C_{\Pi_{M_2}}NOT \right)$ be a block-encoding of $\AA$, and let $U_b$ be an oracle preparing $\ket{b}$. There exists a quantum algorithm that outputs a quantum state $\epsilon$-close to the quantum state $\ket{x} = \frac{\AA^{+} \ket{b}}{\|\AA^{+} \ket{b}\|}$ using $\bigO{\tilde{\alpha}(U_{\AA}) \kappa^2(\AA) \log\left(\frac{\kappa(\AA)}{\epsilon}\right)}$ queries to $U_\AA$ and $\bigO{\kappa(\AA)}$ queries to $U_b$.
\label{lem:qlss_qsvt}
\end{lemma}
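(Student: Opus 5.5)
The plan is to reduce the statement to a polynomial approximation of the pseudoinverse and then apply the quantum singular value transformation (QSVT) to the given block-encoding. Write $\kappa := \kappa(\AA)$ and $\alpha := \alpha(U_\AA)$. The nonzero singular values of $\AA/\alpha$ lie in $[\sigma_{\min}^+(\AA)/\alpha,\, \|\AA\|/\alpha] \subseteq [1/(\tilde{\alpha}(U_\AA)\kappa), 1]$. Set $\kappa' := \tilde{\alpha}(U_\AA)\kappa$; this is the effective condition number seen by QSVT.

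\textbf{Step 1: polynomial approximation.} Take an odd polynomial $P$ of degree $\bigO{\kappa' \log(\kappa'/\epsilon')}$ with $|P| \le 1$ on $[-1,1]$ and $|P(x) - \frac{1}{2\kappa' x}| \le \epsilon'$ for $|x| \ge 1/\kappa'$. The standard construction multiplies $1/x$ by a smoothed rectangle or erf-type window and truncates the Chebyshev expansion; it is the one used in the cited results. QSVT applied to $(\alpha, U_\AA, C_{\Pi_{M_1}}NOT, C_{\Pi_{M_2}}NOT)$ implements $P^{(SV)}(\AA^\dagger/\alpha)$, which is roughly $(\alpha/(2\kappa'))\AA^{+}$ restricted to the range. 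The number of uses of $U_\AA$ and $U_\AA^\dagger$ equals the degree, and each use costs one query.

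\textbf{Step 2: extracting the state.} Apply this block-encoding to $\ket{b}$ prepared by $U_b$. The good branch has amplitude $\|P^{(SV)}\ket b\| \gtrsim (\alpha/\kappa')\, \|\AA^+\ket b\| \ge 1/(\tilde{\alpha}\kappa')$. This bound uses $\|\AA^+\ket b\| \ge 1/\|\AA\|$, assuming $\ket b$ lies in the range of $\AA$, as in the well-posed case. Amplitude amplification then needs $\bigO{\tilde{\alpha}\kappa'}$ rounds. A cleaner accounting relies on the convention of the cited theorem (\cite[Theorem 33]{PhysRevA.104.032422}, \cite[Theorem 19.3]{lin2026lecturenotes}) that the amplitude is $\Omega(1/\kappa)$, so only $\bigO{\kappa}$ rounds are needed; this produces the stated $\bigO{\kappa}$ queries to $U_b$.

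\textbf{Step 3: counting queries.} The total number of queries to $U_\AA$ is the number of amplification rounds times the polynomial degree, namely $\bigO{\kappa} \cdot \bigO{\tilde{\alpha}\kappa \log(\kappa/\epsilon)}$. This equals $\bigO{\tilde{\alpha}\kappa^2\log(\kappa/\epsilon)}$. Here $\log\tilde{\alpha}$ is absorbed, or $\epsilon'$ is chosen as $\epsilon/\kappa$ so that the relative error on the normalized state is at most $\epsilon$.

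\textbf{Expected main obstacle.} The hardest part will be the error propagation. A uniform error $\epsilon'$ on the polynomial must translate into an $\epsilon$-error on the normalized output, even though the good amplitude is only $\sim 1/\kappa$. This forces $\epsilon' \sim \epsilon/\kappa$, which costs only a logarithmic factor. A second technical point is the rectangular and pseudoinverse setting: I would handle it through the singular value decomposition of the projected unitary, on which QSVT acts on singular values only. Since the statement is quoted from the cited works, I would verify the hypotheses and invoke them rather than rederive the polynomial bounds.
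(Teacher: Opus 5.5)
Your route is the one behind the paper's citation. The paper gives no argument of its own: it invokes \cite[Theorem 33]{PhysRevA.104.032422} and \cite[Theorem 19.3]{lin2026lecturenotes}, and refers to \cite{Gily_n_2019} for the pseudoinverse. Your Steps 1 and 3 are the standard ingredients: an odd polynomial approximating $1/(2\kappa' x)$ for $|x|\ge 1/\kappa'$, QSVT applied to $U_\AA^\dagger$, and $\epsilon'\sim\epsilon/\kappa$.

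There is, however, a slip in Step 2. It sits exactly where the stated bound differs from a naive one. From $\|\AA^+\ket b\|\ge 1/\|\AA\|$ you get $(\alpha/\kappa')/\|\AA\| = \tilde\alpha(U_\AA)/\kappa' = 1/\kappa$, not $1/(\tilde\alpha\kappa')$. Taken literally, your $\bigO{\tilde\alpha\kappa'}$ rounds would cost $\bigO{\tilde\alpha^2\kappa}$ queries to $U_b$ and $\bigO{\tilde\alpha^3\kappa^2\log(\kappa/\epsilon)}$ queries to $U_\AA$, contradicting the statement. Appealing to a ``convention of the cited theorem'' does not repair this. A standard solver stated in terms of the effective condition number $\kappa'=\tilde\alpha\kappa$ only guarantees $\bigO{\kappa'^2\log(\kappa'/\epsilon)}$ and $\bigO{\kappa'}$, which is quadratic in $\tilde\alpha$.

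The correct reason is a direct computation. For every nonzero singular value $\sigma$ of $\AA$,
\[
P(\sigma/\alpha)\approx \frac{\alpha}{2\kappa'\sigma} = \frac{\sigma^+_{\min}(\AA)}{2\sigma}\in\left[\frac{1}{2\kappa},\frac12\right].
\]
So for $\ket b$ in the range of $\AA$ the good amplitude is at least $1/(2\kappa)-\epsilon'$, and $\bigO{\kappa}$ rounds suffice, independently of $\tilde\alpha$. The subnormalization enters only through the degree, because the polynomial must resolve singular values down to $1/\kappa'$. This is what makes the $U_\AA$ count linear in $\tilde\alpha$ and the $U_b$ count free of $\tilde\alpha$. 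It is also the amplitude you already use, correctly, in your error analysis.

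Two smaller remarks. First, your hypothesis that $\ket b$ lies in the range of $\AA$ is genuinely needed for the pseudoinverse version with $\bigO{\kappa}$ queries to $U_b$; without it, $\|\AA^+\ket b\|$ can be arbitrarily small. It does hold in the paper's application, since $\mathrm{range}(\tilde{F}^\top A\tilde{F})=\mathrm{range}(\tilde{F}^\top)$ contains $\tilde{F}^\top b$. Second, with the polynomial you chose, the logarithm is really $\log(\kappa'/\epsilon')=\bigO{\log(\tilde\alpha\kappa/\epsilon)}$. Saying $\log\tilde\alpha$ is ``absorbed'' tacitly assumes $\tilde\alpha$ is polynomially bounded in $\kappa/\epsilon$; the statement suppresses the same factor.
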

The complexity can be further improved to almost linear in $\kappa(\mathcal A)$ using variable time amplitude amplification \cite{zbMATH06070940}. In the cited references, $\AA$ is non-singular and the lemma is written with $\AA^{-1}$ instead of the pseudo-inverse $\AA^+$. However pseudo-inversion is not a difficulty for QSVT and was already considered in the original article \cite{Gily_n_2019}. 

From this result, we observe that QSVT is efficient if $\AA$ is well conditioned, and if there exists a block-encoding of $\AA$ with a small subnormalization factor. To improve the condition number, we precondition by domain decomposition as proposed in Section~\ref{sec:DD}. It is then the condition number of the split preconditioned operator 
\[
\tilde{F}^\top A \tilde{F} = \tilde{F}^\top C_L^\top (D_{\rho} \otimes Id_{2^d}) C_L \tilde{F}.
\]
which plays a role and we have already analyzed it. We are left with the analysis of the subnormalization factor $\tilde{\alpha}(U_{\tilde{F}^\top A \tilde{F}})$.
We will propose a particular choice of local solvers ${F^{(i)}}^\top F^{(i)}$ in the domain decomposition preconditioner for which $\tilde{\alpha}(U_{\tilde{F}^\top A \tilde{F}})$ remains bounded independently of the problem parameters.

\subsection{Block-encoding of the preconditioned system}
The following theorem presents a block-encoding of the preconditioned system provided access to a block-encoding of the coefficient $D_\rho$ (following \textit{e.g}, \cite[Proposition 3.4]{Deiml_2025}), as well as to block-encodings of the local matrices $C_L^{(i)} F^{(i)}$ and to a block encoding of the coarse correction $C_L Z F^{(0)}$. We recall that these matrices form the blocks of $C_L \tilde{F}$ in \eqref{eq:precond_gradient}, the definition of which we state again for clarity:
\[
C_L \tilde{F} = \left( {\mathcal{R}_L^{(1)}}^\top C_L^{(1)} F^{(1)}  |~\dots~| {\mathcal{R}_L^{(i)}}^\top C_L^{(i)} F^{(i)} | \dots~| {\mathcal{R}_L^{(N)}}^\top C_L^{(N)} F^{(N)} | C_L Z F^{(0)} \right).
\]

\begin{theorem}
Let $U_{D_\rho}$ and $U_{C_L Z F^{(0)}}$ be block-encodings of $D_\rho$ and $C_L Z F^{(0)}$. For any $i=1, \dots, N$, let  $U_{C_L^{(i)} F^{(i)}}$ be a block-encoding of $C_L^{(i)} F^{(i)}$. There exists a block-encoding $U_{\tilde{F}^\top A \tilde{F}}$ of $\tilde{F}^\top A \tilde{F}$ with normalization factor
\begin{equation}
\alpha (U_{\tilde{F}^\top A \tilde{F}}) = \alpha(U_{D_\rho}) \left( \sum_{i=1}^{N} \alpha\left(U_{C_L^{(i)} F^{(i)}}\right)^2 + \alpha \left( U_{C_L Z F^{(0)}} \right)^2 \right),
\label{eq:normalization_BE_DD}
\end{equation}
and subnormalization factor
\begin{equation}
\tilde{\alpha} (U_{\tilde{F}^\top A \tilde{F}}) \leq \kappa(D_\rho) \tilde{\alpha}(U_{D_\rho}) \left(\sum_{i=1}^{N} \tilde{\alpha}\left(U_{C_L^{(i)} F^{(i)}}\right)^2 + \tilde{\alpha}\left( U_{C_L Z F^{(0)}} \right)^2 \right).
\label{eq:subnormalization_BE_DD}
\end{equation}
\label{thm:th_BE_DD}
\end{theorem}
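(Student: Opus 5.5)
The construction proceeds in three stages: block-encode the horizontal concatenation $C_L\tilde F$, then form the product $(C_L\tilde F)^\top (D_\rho\otimes I_{2^d})(C_L\tilde F)$, then bound its subnormalization.

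\textbf{Stage 1: block-encoding of $C_L\tilde F$.} For each $i$, the matrix ${\mathcal R_L^{(i)}}^\top C_L^{(i)}F^{(i)}$ has the same normalization as $C_L^{(i)}F^{(i)}$. Indeed, ${\mathcal R_L^{(i)}}^\top$ is an isometric embedding: it selects coordinates and has orthonormal columns. It can therefore be absorbed into the output projection $\Pi_{M_1}$, for instance by a unitary relabeling of element indices, so it costs no extra factor.

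Next I concatenate the $N+1$ blocks horizontally. To do this, I add an index register $\ket{i}$, $i=0,\dots,N$, and prepare it with a state-preparation unitary whose amplitudes are
\[
\sqrt{\alpha_i^2/\textstyle\sum_j\alpha_j^2}.
\]
Here $\alpha_i=\alpha(U_{C_L^{(i)}F^{(i)}})$ and $\alpha_0=\alpha(U_{C_LZF^{(0)}})$. I then apply the block-encodings selected by this register.

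A more convenient route is to avoid the standard LCU and place the index register in the \emph{input} space: the column index of $C_L\tilde F$ is the pair $(i,\text{local column})$. The select unitary $\sum_i \ket{i}\bra{i}\otimes U_i$ then gives the block $\frac{1}{\alpha_i}{\mathcal R}^\top C^{(i)}F^{(i)}$ in position $i$. To equalize the scalings, I rotate an ancilla by $\alpha_i/\sqrt{\sum_j\alpha_j^2}$, controlled on $i$. This yields a block-encoding of $C_L\tilde F$ with
\[
\alpha(U_{C_L\tilde F})=\Bigl(\sum_i\alpha_i^2\Bigr)^{1/2}.
\]
Since all blocks map into the common space $Q_L^d$, the output projection is shared across $i$.

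\textbf{Stage 2: the product.} I multiply block-encodings: $U_{C_L\tilde F}^\dagger$, then $U_{D_\rho}$ tensored with the identity on the $2^d$ factor, then $U_{C_L\tilde F}$. Normalization factors multiply under products (Gilyén et al.), and the transpose is realized by the adjoint since the matrices are real. This gives exactly \eqref{eq:normalization_BE_DD}.

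\textbf{Stage 3: subnormalization bound.} Substitute $\alpha=\tilde\alpha\,\|\cdot\|$ into \eqref{eq:normalization_BE_DD}. It then suffices to show
\[
\|D_\rho\|\,\|C^{(i)}F^{(i)}\|^2\le \kappa(D_\rho)\,\|\tilde F^\top A\tilde F\| \quad\text{for each block, including the coarse one.}
\]
For this I use
\[
\tilde F^\top A\tilde F \succeq \lambda_{\min}(D_\rho)\,(C_L\tilde F)^\top C_L\tilde F .
\]
Hence
\[
\|\tilde F^\top A\tilde F\|\ge \lambda_{\min}(D_\rho)\|C_L\tilde F\|^2\ge \lambda_{\min}(D_\rho)\,\|{\mathcal R}^\top C^{(i)}F^{(i)}\|^2,
\]
because the norm of a submatrix (a column block) is at most the norm of the whole matrix. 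Multiplying by $\|D_\rho\|=\lambda_{\max}(D_\rho)$ gives the factor $\kappa(D_\rho)$. The same bound holds for the $2^d$-tensored $D_\rho$, since the tensor with the identity does not change its spectrum.

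I expect the main technical obstacle to be Stage 1. One must ensure that the projections $C_{\Pi}NOT$ for the concatenated matrix are implementable. The local column spaces have different dimensions for the coarse block versus the local blocks, so the input projection must be the direct sum $\sum_i\ket{i}\bra{i}\otimes\Pi^{(i)}_{M_2}$, controlled on $i$. One must also check that the weighted rotations produce squared rather than linear sums of the $\alpha_i$. I would handle the differing dimensions by padding to a common power of two and letting the controlled projection encode the padding.
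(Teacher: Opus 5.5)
Your proposal is correct and follows the paper's route. The three ingredients are the same.
\begin{itemize}
\item Concatenate the blocks of $C_L\tilde F$ with normalization $(\sum_i\alpha_i^2)^{1/2}$. The paper obtains this from \cite[(3.1.e)]{Deiml_2025}, with $\alpha(U_{\mathcal{R}_L^{(i)}})=1$ and the product rule.
\item Apply the product rule to $(C_L\tilde F)^\top(D_\rho\otimes I_{2^d})C_L\tilde F$.
\item Use the lower bound $\|\tilde F^\top A\tilde F\|\ge\lambda_{\min}(D_\rho)\|C_L\tilde F\|^2$, together with the fact that each column block has norm at most $\|C_L\tilde F\|$.
\end{itemize}
The difference in Stage 3 is cosmetic. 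You bound the subnormalization in one step via the Loewner order. The paper first shows $\tilde\alpha(U_{C_L\tilde F})^2\le\sum_i\tilde\alpha_i^2$ and then extracts $\kappa(D_\rho)$ through $D_\rho^{1/2}$ and its pseudo-inverse. The underlying inequality is identical.

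There is one local slip in Stage 1 that you should correct. In your ``more convenient route'', rotating a fresh ancilla by $\alpha_i/\alpha$ controlled on $i$, with $\alpha=(\sum_j\alpha_j^2)^{1/2}$, does not merge the branches. After the select, the index register is still in $\ket{i}$.
\begin{itemize}
\item If the output projection keeps the index register, you get a block-diagonal matrix, not the horizontal concatenation.
\item A projection with orthonormal rows can only collapse the index register by pairing it with a unit vector $\sum_i w_i\ket{i}$. This multiplies block $i$ by a further factor $\bar w_i$, with $\sum_i|w_i|^2=1$.
\item Combined with your rotation, equalizing the blocks then forces $|w_i|=1/\sqrt{N+1}$. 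That costs an extra factor $\sqrt{N+1}$ in the normalization.
\end{itemize}
The right mechanism is the one in your first paragraph, with the weights placed on the index register itself. After the select, apply $\mathrm{PREP}^\dagger$, where $\mathrm{PREP}\ket{0}=\sum_i(\alpha_i/\alpha)\ket{i}$, and project the index register onto $\ket{0}$. No extra rotation is needed, and $\sum_i(\alpha_i/\alpha)^2=1$ is exactly where the sum of squares comes from.

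Similarly, ${\mathcal{R}_L^{(i)}}^\top$ depends on $i$, while the output projection must be common to all blocks. So it should be realized as an $i$-controlled relabeling unitary inside the select, as in the paper's implementation section, rather than absorbed into $\Pi_{M_1}$.
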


\begin{proof}
For any  $1\leq i \leq N$, $\mathcal{R}_L^{(i)}$ is orthogonal so
$\kappa(\mathcal{R}_L^{(i)}) = 1$ and there exists a block-encoding $U_{\mathcal{R}_L^{(i)}}$ of $\mathcal{R}_L^{(i)}$ such that $\alpha(U_{\mathcal{R}_L^{(i)}})=\tilde\alpha(U_{\mathcal{R}_L^{(i)}}) =1$. Then, applying \cite[(3.1.e)]{Deiml_2025} followed by \cite[(3.1.d)]{Deiml_2025}, the normalization factor of the block-encoding of the block matrix $C_L \tilde{F}$ is
\[
\begin{aligned}
\alpha (U_{C_L \tilde{F}}) &= \left( \sum_{i=1}^{N} \alpha\left(U_{{\mathcal{R}_L^{(i)}}^\top C_L^{(i)} F^{(i)}}\right)^2 + \alpha \left(U_{C_L Z F^{(0)}} \right)^2 \right)^{1/2},
\\ &= \left( \sum_{i=1}^{N} \alpha\left(U_{C_L^{(i)} F^{(i)}}\right)^2 + \alpha \left(U_{C_L Z F^{(0)}} \right)^2   \right)^{1/2}.
\end{aligned}
\]

Using the same arguments the subnormalization factor is bounded by
\[
\tilde{\alpha} \left(U_{C_L \tilde{F}}\right) \leq \left( \sum_{i=1}^{N} \tilde{\alpha}\left(U_{  C_L^{(i)} F^{(i)}}\right)^2 + \tilde{\alpha}\left( U_{C_L Z F^{(0)}} \right)^2 \right)^{1/2}.
\]

By \cite[(3.1.d)]{Deiml_2025}, the normalization factor of the preconditioned system is 
\[
\alpha (U_{\tilde{F}^\top A \tilde{F}}) = \alpha(U_{D_\rho}) \alpha(U_{C_L \tilde{F}})^2 = \alpha(U_{D_\rho}) \left( \sum_{i=1}^{N} \alpha(U_{C_L^{(i)} F^{(i)}})^2 + \alpha \left(U_{C_L Z F^{(0)}} \right)^2 \right).
\]
The subnormalization factor of the preconditioned system is proved by following \cite[Proof of Theorem 6.2]{Deiml_2025}

\[
\| \tilde{F}^\top A \tilde{F} \| = \|\tilde{F}^\top C_L^\top \left( D_{\rho} \otimes I_{2^d} \right) C_L \tilde{F} \| = \|\left( D_{\rho}^{1/2} \otimes I_{2^d} \right) C_L \tilde{F} \|^2 \geq \frac{\| D_\rho^{1/2} \|^2 \| C_L \tilde{F} \|^2}{ \kappa(D_\rho)}.
\]
The last inequality follows from $D_{\rho}^{1/2}$ being invertible and 
\[
\begin{aligned}
\| D_\rho^{1/2} \|& \| C_L \tilde{F} \| = \| D_\rho^{1/2} \| \left\| \left( ( {D_\rho^{1/2}}^+ D_\rho^{1/2})\otimes I_{2^d}\right) C_L \tilde{F} \right\| 
\\& \leq \| D_\rho^{1/2} \| \| {D_\rho^{1/2}}^+ \| \left\|\left(D_\rho^{1/2} \otimes I_{2^d} \right) C_L \tilde{F} \right\| 
 = \kappa(D_\rho^{1/2}) \left\| \left(D_\rho^{1/2} \otimes I_{2^d} \right) C_L \tilde{F} \right\|.
\end{aligned}
\]
The subnormalization factor is then bounded by
\[
\begin{aligned}
\tilde{\alpha} (U_{\tilde{F}^\top A \tilde{F}})  = \frac{\alpha (U_{\tilde{F}^\top A \tilde{F}})}{\| \tilde{F}^\top A \tilde{F} \|} & \leq \kappa(D_\rho) \tilde{\alpha}(U_{D_\rho}) \tilde{\alpha}(U_{C_L \tilde{F}})^2 \\
& \leq \kappa(D_\rho) \tilde{\alpha}(U_{D_\rho}) \left( \sum_{i=1}^{N} \tilde{\alpha}\left(U_{C_L^{(i)} F^{(i)}}\right)^2 + \tilde{\alpha}\left( U_{C_L Z F^{(0)}} \right)^2  \right).
\end{aligned}
\]

\end{proof}

In order to propose a full quantum domain decomposition result, we discuss implementation choices in Section~\ref{sec:impRitilde}, particularly for the block encoding of the matrices $\mathcal{R}_L^{(i)}$. Then, in Section~\ref{sec:quantumDDBPX}, we set the local solvers $F^{(i)}{F^{(i)}}^\top$ to be the BPX approximation of ${A^{(i)}}^{-1}$, and give our final result in \cref{thm:QLSS_DD_BPX}.

\subsection{Implementation}
\label{sec:impRitilde}
Up until now, we have not considered the concrete implementation of the block-encodings. An efficient circuit that block-encodes the preconditioned system $U_{\tilde{F}^\top A \tilde{F}}$ is necessary in order to ensure the efficiency of the QLSS. The same remark applies to the state-preparation oracles. 

We first detail the different registers that are involved in the block-encodings. There are five: 
\begin{itemize}
\item Registers $\ket{j}$, $\ket{k}$ and $\ket{s}$ are the same as in \cite{Deiml_2025}. Register $\ket{j}$ encodes the mesh elements or the degrees of freedom of the local Lagrange finite element spaces. Based on the tensor product structure of the Cartesian mesh and of the Lagrange spaces, it consists of $d$ sub-registers $\ket{j_1} \cdots \ket{j_d}$, each containing $L$ qubits. Register $\ket{k}$ is associated with the $2^d$ degrees of freedom per mesh element of the local Discontinuous Galerkin spaces, so it contains $d$ qubits $\ket{k_1} \cdots \ket{k_d}$. Register $\ket{s}$, which consists of $\log_2(d)$ qubits, encodes the $d$ components of the gradient. 
\item We introduce a register $\ket{i}$ associated with the $N$ subdomains. It is composed of $d$ registers $\ket{i_1} \cdots \ket{i_d}$ (see Remark~\ref{rem:i-multiindex}). For $1 \leq s \leq d $, register $\ket{i_s}$ contains $n_s= log_2(N_s)$ qubits (where $N_s$ is the number of subdomains in direction $s$).
\item Finally, we introduce a new register $\ket{\tilde{i}}$ which has the same structure as $\ket{i}$ in order to perform quantum domain decomposition. 
\end{itemize}
\medskip

\paragraph{Local to global Prolongation operators}
We now propose a block-encoding of the prolongation operators ${\mathcal{R}^{(i)}_L}^\top : {Q_L^{(i)}}^d \hookrightarrow Q_L^{d}$ defined in Section~\ref{sec:splitprec}. Each matrix ${\mathcal{R}^{(i)}_L}^\top$ is of order $d 2^{(L+1)d} \prod_{s=1}^d (N_s - 2 \delta (N_s-1)) \times d 2^{(L+1)d}$. Having split the domain $\Omega$ into an overlapping Cartesian grid of subdomains, the prolongation operators display a tensor-product structure,
\begin{equation}
    {\mathcal{R}^{(i)}_L}^\top = I_d \otimes {\mathcal{R}^{(i_1)}_{L,1D}}^\top \otimes \cdots \otimes {\mathcal{R}^{(i_d)}_{L,1D}}^\top,
\end{equation}
where each $\mathcal{R}^{(\text{ind}_s)}_{L,1D}$ performs the restriction to local elements for the $s$-th coordinate. Therefore, we can focus on the implementation of one of the ${\mathcal{R}^{(\text{ind}_s)}_{L,1D}}^\top$ with $1 \leq \text{ind}_s \leq N_s$.
The index of a Discontinuous Galerkin degree of freedom encoded by $\ket{k_s}\ket{j_s}\ket{i_s}$ is $k_s + 2 j_s +2^{L+1} i_s - 2^{L+2} \delta i_s$. The term $-2^{L+2} \delta i_s$ accounts for the overlap. Quantum arithmetic based on the Fourier transform \cite{draper2000additionquantumcomputer} enables the implementation of the operator $U_{{\mathcal{R}^{(\text{ind}_s)}_{L,1D}}^\top}$
\begin{equation}
    \ket{k_s,j_s,i_s}\ket{\tilde{i}_s} \rightarrow \ket{k_s + 2j_s +2^{L+1}i_s - 2^{L+2}\tilde{i}_s \delta}\ket{\tilde{i}_s}.
\end{equation}

The row index encoded by $\ket{k_s,j_s,i_s,\tilde{i}_s}$ is the integer $p:=k_s + 2j_s +2^{L+1}i_s + 2^{n_s+L+1}\tilde{i}_s$.
The $C_{\Pi_{row}} \text{NOT}$ gate flips the ancilla qubit when the index of the row satisfies $p \leq M_s := N_s 2^{L+1} - 2^{L+2} \delta(N_s -1) + N_s 2^{L+1} (\text{ind}_s - 1)$ and $p \geq N_s 2^{L+1} (\text{ind}_s - 1)$. The number of rows selected by $C_{\Pi_{row}} \text{NOT}$ corresponds to the total number of degrees of freedom for the Discontinuous Galerkin space in direction $s$, \textit{i.e.} $N_s 2^{L+1} - 2^{L+2} \delta(N_s -1)$. Each of the two inequality tests can also be performed by QFT-based quantum arithmetic with one ancilla qubit \cite{Yuan_2023,10.5555/2011517.2011525}. For the inequality $p \leq M_s$, the following operations are carried out in the Fourier spaces
\begin{subequations}
\begin{align}
    \ket{k_s,j_s,i_s,\tilde{i}_s}\ket{0} &\rightarrow \ket{p - M_s - 1}_{p \geq M_s+1}\ket{0} + \ket{p - M_s -1 + N_s^2 2^{L+1}}_{p < M_s+1}\ket{1} \\
    & \rightarrow \ket{k_s,j_s,i_s,\tilde{i}_s}\ket{p < M_s+1}.
\end{align}
\end{subequations}
Similar operations are performed for the inequality $p \geq N_s 2^{L+1} (\text{ind}_s - 1)$.
The $C_{\Pi_{col}}\text{NOT}$ gate flips the ancilla qubit when $\ket{\tilde{i}_s} = \ket{i_s} = \text{ind}_s-1$. This equality test can be performed using quantum arithmetic based on multi-controlled gates. This gate selects the $2^{L+1}$ columns.

\paragraph{Preparation of the right hand side}
The efficiency of the quantum linear system solver also relies on the efficiency of the state preparation. We address the preparation of the preconditioned right hand side 
\[
{\tilde{F}}^\top b = \begin{pmatrix}
{F^{(1)}}^\top R_L^{(1)} b \\
\vdots
\\
{F^{(N)}}^\top R_L^{(N)} b \\
{F^{(0)}}^\top Z^\top b
\end{pmatrix}.
\]

We assume that we already know how to efficiently encode local components $\displaystyle \ket{{F^{(i)}}^\top R_L^{(i)} b } := \frac{{F^{(i)}}^\top R_L^{(i)} b }{\|{F^{(i)}}^\top R_L^{(i)} b \|}$ and the coarse component $\displaystyle \ket{{F^{(0)}}^\top Z^\top b } := \frac{{F^{(0)}}^\top Z^\top b }{\|{F^{(0)}}^\top Z^\top b \|}$ using $m$ qubits. Supposing access to all of the norms, we can then prepare the $(n+1)$-qubit state $\ket{0}_{n+1} \rightarrow \frac{\|{F^{(0)}}^\top Z^\top b\|}{\|{\tilde{F}}^\top b\|} \ket{0}_{n+1} + \sum_{i=1}^N \frac{\|{F^{(i)}}^\top R_L^{(i)} b\|}{\|{\tilde{F}}^\top b\|} \ket{i}_{n+1}$ with a cost at most linear in $N=2^n$. Controlling the state preparation of $\ket{{F^{(i)}}^\top R_L^{(i)} b}$ and $\ket{{F^{(0)}}^\top Z^\top b}$ on the $n+1$-qubit register, we get $\ket{\tilde{F}^\top b}$. More specifically, these two steps lead to

\[
\begin{aligned}
\ket{0}_{n+1} \ket{0}_m \rightarrow & \left( \frac{\|{F^{(0)}}^\top Z^\top b\|}{\|{\tilde{F}}^\top b\|} \ket{0}_{n+1} + \sum_{i=1}^N \frac{\|{F^{(i)}}^\top R_L^{(i)} b\|}{\|{\tilde{F}}^\top b\|} \ket{i}_{{n+1}} \right) \ket{0}_{m}, \\
\rightarrow & \frac{\|{F^{(0)}}^\top Z^\top b\|}{\|{\tilde{F}}^\top b\|} \ket{0} \ket{{F^{(0)}}^\top Z^\top b } + \sum_{i=1}^N \frac{\|{F^{(i)}}^\top R_L^{(i)} b\|}{\|{\tilde{F}}^\top b\|} \ket{i} \ket{{F^{(i)}}^\top R_L^{(i)} b } = \ket{\tilde{F}^\top b}.
\end{aligned}
\]

\subsection{Domain decomposition with local BPX solvers}
\label{sec:quantumDDBPX}
One possible choice of local solver is the Bramble--Pasciak--Xu (BPX) preconditioner \cite{zbMATH04197323} which is spectrally equivalent to $A^{(i)}$, and for which \cite{Deiml_2025} proposes a quantum implementation. We first define the BPX preconditioner in subdomain $\Omega^{(i)}$.

\begin{definition}[BPX preconditioner {\cite[Section 5]{Deiml_2025}}] Let $i \in \{ 1, \dots, N \}$ be the index of a subdomain. Let $\mathcal{T}^{(i)}_1 \in \cdots \in \mathcal{T}^{(i)}_L$ be a sequence of nested grids of the unit hypercube $\Omega^{(i)}$: each grid $\mathcal{T}^{(i)}_l$ is of mesh size $2^{-l}$, and is equipped with the $\mathbb Q_1$ finite element space $V_l^{(i)}$ of $d$-linear functions that are zero on the boundary and globally continuous. (For index $L$, the notation coincides with that of Section~\ref{sec:geom}.)

If, for every level $l \in \{ 1, \dots, L \}$, $\left\{ \phi_{j,l}^{(i)} \right\}_{1 \leq j \leq \text{dim } V_l^{(i)}}$ is a basis of $V_l^{(i)}$, and $\mathcal I_{l,L}$ is the matrix representation of the embedding operator $V_l^{(i)} \hookrightarrow V_L^{(i)}$, then, the BPX preconditioner is  
\begin{equation*}
 H^{(i)} := F^{(i)} {F^{(i)}}^\top; \, F^{(i)} := (2^{-(2-d)/2} \mathcal I_{1,L} | \cdots |2^{-l(2-d)/2} \mathcal I_{l,L} | \cdots |2^{-L(2-d)/2} \mathcal I_{L,L}  ).
\end{equation*}
\label{def:BPX_precond}
\end{definition}
Remark that $F^{(i)} \in\mathbb{R}^{\text{dim } V_L^{(i)} \times \sum_{l=1}^L \text{dim } V_l^{(i)} }$. We next state the spectral equivalence result.
\begin{lemma}
For each subdomain $1\leq i \leq N$, let $H^{(i)} = F^{(i)} {F^{(i)}}^\top$ be the split BPX matrix from \cref{def:BPX_precond}. The eigenvalues of $H^{(i)} A^{(i)}$ are in an interval $[\mu_{\min}, \mu_{\max}]$ with bounds independent of the mesh size (parametrized by $L$). 
\label{lem:eigenvalues_bpx}
\end{lemma}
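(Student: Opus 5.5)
The plan is to reduce the statement to the classical BPX spectral equivalence on a single subdomain. Each $\Omega^{(i)}$ is a unit hypercube with a dyadic hierarchy of Cartesian meshes. $A^{(i)}$ is the $\mathbb Q_1$ stiffness matrix of the Dirichlet problem on $\Omega^{(i)}$ with coefficient $\rho$. So the problem is identical for every $i$ (up to the restriction of $\rho$), and it suffices to prove bounds for one generic subdomain. These bounds will depend only on $d$ and on $\rho_{\min},\rho_{\max}$ through \eqref{eq:assumption-rho}, not on $L$ or $i$. Since $H^{(i)}$ is symmetric positive definite (it contains the term $\mathcal I_{L,L}=I$), the eigenvalues of $H^{(i)}A^{(i)}$ are real and positive. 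As in the proof of Theorem~\ref{th:th_kappa_DD_BPX}, I would characterize them through the Rayleigh quotient $x^\top A^{(i)} x / x^\top {H^{(i)}}^{-1} x$.

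The key step is the additive Schwarz identity for the multilevel splitting $V_L^{(i)}=\sum_{l=1}^L V_l^{(i)}$ with scaled local solvers. I would write $H^{(i)}=\sum_{l} 2^{-l(2-d)}\mathcal I_{l,L}\mathcal I_{l,L}^\top$. The standard abstract Schwarz lemma then gives
\[
x^\top {H^{(i)}}^{-1} x=\inf\Big\{\sum_{l=1}^L 2^{l(2-d)} |v_l|^2 : u=\textstyle\sum_l \mathcal I_{l,L} v_l\Big\},
\]
where $|v_l|$ is the Euclidean norm of the nodal coefficients. On a quasi-uniform $\mathbb Q_1$ mesh of size $h_l=2^{-l}$ the mass matrix satisfies $\|w\|_{L^2}^2\simeq h_l^d|v_l|^2$. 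Hence $2^{l(2-d)}|v_l|^2\simeq h_l^{-2}\|w_l\|_{L^2}^2$, with constants depending only on $d$. So $x^\top {H^{(i)}}^{-1}x\simeq \inf\sum_l h_l^{-2}\|w_l\|^2_{L^2}$ over decompositions $u=\sum_l w_l$, $w_l\in V_l^{(i)}$. Also $x^\top A^{(i)}x\simeq |u|_{H^1}^2$, with constants $\rho_{\min},\rho_{\max}$.

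It remains to invoke the BPX norm equivalence
\[
|u|_{H^1(\Omega^{(i)})}^2\simeq \inf_{u=\sum w_l}\sum_{l=1}^L h_l^{-2}\|w_l\|_{L^2}^2 \quad\text{uniformly in } L,
\]
which is exactly \cite{zbMATH04197323} in the sharp form of Oswald and of Dahmen--Kunoth, as used in \cite[Section 5]{Deiml_2025}. For the upper bound on $\mu_{\max}$, I would use a strengthened Cauchy--Schwarz inequality between levels, $a(w_l,w_k)\lesssim 2^{-|l-k|/2}h_l^{-1}h_k^{-1}\|w_l\|\|w_k\|$, together with the inverse inequality. For the lower bound on $\mu_{\min}$, I would take the $L^2$-projection decomposition $w_l=(Q_l-Q_{l-1})u$. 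This is stable in $H^1$ on quasi-uniform nested meshes, and the Jackson and Bernstein estimates give $\sum_l h_l^{-2}\|(Q_l-Q_{l-1})u\|^2\lesssim |u|_{H^1}^2$. The main obstacle is this lower bound: the sharp result without a $\log$ or $L$ factor requires $H^1$-stability of $Q_l$ and Besov-type characterizations. I would cite these results rather than reprove them, checking only that they are stated for tensor-product $\mathbb Q_1$ elements on dyadic Cartesian meshes of a hypercube with homogeneous Dirichlet conditions. They are, which is the setting of \cite{Deiml_2025}.
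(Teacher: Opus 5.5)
Your proposal is correct and takes essentially the same route as the paper, which likewise reduces the lemma to the classical log-free BPX optimality for the homogeneous Dirichlet problem on the unit hypercube and relies on the literature for the separate bounds on $\mu_{\min}$ and $\mu_{\max}$. Your version unpacks that citation: the additive Schwarz identity for ${H^{(i)}}^{-1}$, the $\mathbb Q_1$ mass-matrix scaling, and the equivalence $x^\top A^{(i)}x\simeq|u|_{H^1}^2$ with constants $\rho_{\min},\rho_{\max}$, which the paper leaves implicit by calling $A^{(i)}$ a Poisson matrix. One small remark: on the convex hypercube, the lower bound you flag as the main obstacle follows elementarily without any Besov-type characterization, using the Ritz-projection decomposition $w_l=(P_l-P_{l-1})u$ ($P_l$ the $H^1_0$-orthogonal projection onto $V_l^{(i)}$, $P_0=0$) together with the Aubin--Nitsche bound $\|w_l\|_{L^2}\lesssim h_l|w_l|_{H^1}$ from $H^2$-regularity.
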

\begin{proof}The matrix $A^{(i)}$ which we defined by extracting rows and columns from the global matrix $A$ is also the matrix of the Poisson problem on $\Omega^{(i)}$ with homogeneous Dirichlet boundary conditions. For this reason, \cref{lem:eigenvalues_bpx} is the usual optimality result for BPX from the literature. It is stated in \cite[Lemma 5.1]{Deiml_2025} that
\[
 \kappa({F^{(i)}}^\top  A^{(i)} F^{(i)}) = \mathcal O (1) 
\]
with no hidden $L$ in the $ \mathcal O$. Although the stated result is for the condition number the cited proofs \textit{e.g.} \cite{zbMATH00168423} prove independently that $\mu_{\min}$ and $\mu_{\max}$ are $\mathcal O (1) $. 
\end{proof}

Another advantage of choosing BPX as our local solver is that, the block-encodings $U_{C_L^{(i)} F^{(i)}}$ in the Assumptions of~\ref{thm:th_BE_DD} are exactly the block-encodings proposed in \cite{Deiml_2025} so we can write the following result. 

\begin{theorem}[Block-encoding of two-level Additive Schwarz with BPX local solvers]
Let $U_{D_\rho}$ be a block-encoding of $D_\rho$ and let $U_{C_L Z F^{(0)}}$ be a block-encoding of $C_L Z F^{(0)}$. Using BPX for the local solves, there exists a block-encoding of $\tilde{F}^\top A \tilde{F}$ with normalization factor
\begin{equation}
\alpha (U_{\tilde{F}^\top A \tilde{F}}) = \alpha(U_{D_\rho}) \left(4 N d L + \alpha \left(U_{C_L Z F^{(0)}} \right)^2 \right),
\end{equation}
and subnormalization factor
\begin{equation}
\tilde{\alpha} (U_{\tilde{F}^\top A \tilde{F}}) \leq \kappa(D_\rho) \tilde{\alpha}(U_{D_\rho}) \left( N d (L+C_d) + \tilde{\alpha} \left(U_{C_L Z F^{(0)}} \right)^2 \right),
\end{equation}
where $C_d$ is a constant that depends only on $d$.
\label{thm:th_BE_DD_BPX}
\end{theorem}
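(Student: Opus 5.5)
This is a direct specialization of Theorem~\ref{thm:th_BE_DD}. Since the local solvers are BPX, each block $C_L^{(i)} F^{(i)}$ is exactly the split gradient-times-BPX operator on the unit hypercube $\Omega^{(i)}$ that is block-encoded in \cite{Deiml_2025}. The whole argument therefore reduces to importing the normalization and subnormalization bounds established there and substituting them into \eqref{eq:normalization_BE_DD} and \eqref{eq:subnormalization_BE_DD}. The coarse block $C_L Z F^{(0)}$ and $D_\rho$ remain generic, so their factors carry through unchanged.

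\textbf{Step 1: the local normalization factor.} I will recall the construction of \cite{Deiml_2025} for $C_L^{(i)} F^{(i)} = (2^{-l(2-d)/2} C_L^{(i)} \mathcal I_{l,L})_{l=1}^L$. Each level block $C_L^{(i)}\mathcal I_{l,L}$, scaled by $2^{-l(2-d)/2}$, factors as a tensor product over the $d$ directions of one-dimensional prolongation/gradient operators. This gives a normalization factor whose square is $4d$ per level: roughly a factor $2$ per derivative direction, with $d$ gradient components stacked. Stacking the $L$ levels by \cite[(3.1.e)]{Deiml_2025} adds the squares, so $\alpha(U_{C_L^{(i)}F^{(i)}})^2 = 4dL$. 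All subdomains are congruent unit hypercubes with identical meshes, so this value is the same for every $i$. Summing over $i=1,\dots,N$ gives the term $4NdL$ in the normalization factor.

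\textbf{Step 2: the local subnormalization factor.} Here I will use the lower bound on $\|C_L^{(i)}F^{(i)}\|$ from \cite[proof of Theorem 6.2 / Lemma 5.1]{Deiml_2025}. Since $\|C_L^{(i)}F^{(i)}\|^2 = \lambda_{\max}({F^{(i)}}^\top {C_L^{(i)}}^\top C_L^{(i)} F^{(i)})$, the BPX analysis shows that this is bounded below by a quantity that makes the ratio
\[
\tilde{\alpha}(U_{C_L^{(i)}F^{(i)}})^2 = \frac{4dL}{\|C_L^{(i)}F^{(i)}\|^2}
\]
at most $d(L+C_d)$. A concrete way to see this: testing $\lambda_{\max}$ against a vector concentrated on the finest levels shows the norm grows like $4L$ up to a $d$-dependent additive correction. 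Summing over $i$ gives $Nd(L+C_d)$, and inserting this into \eqref{eq:subnormalization_BE_DD} yields the claimed bound.

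\textbf{Main obstacle.} I expect Step 2 to be the difficult part. I need to extract from \cite{Deiml_2025} the exact form of the bound $\tilde\alpha^2 \le d(L+C_d)$ rather than a cruder $\mathcal O(L)$. I also need to check that their result, which is stated for the identity coefficient and a single unit hypercube, transfers verbatim to each subdomain. That transfer should hold because $C_L^{(i)}F^{(i)}$ does not involve $\rho$ at all, since $\rho$ enters only through $D_\rho$. If the cited bound is instead phrased in terms of $\kappa({F^{(i)}}^\top A^{(i)}F^{(i)})$ with $\rho=I$, I would convert it using Lemma~\ref{lem:eigenvalues_bpx} together with the explicit value of the normalization factor.
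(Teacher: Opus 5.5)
Your overall route is the paper's. Its proof quotes $\alpha(U_{C_L^{(i)}F^{(i)}})^2=4dL$ and $\tilde\alpha(U_{C_L^{(i)}F^{(i)}})^2\le d(L+C_d)$ directly from \cite[Theorem 6.3]{Deiml_2025}, where both are stated, and injects them into \eqref{eq:normalization_BE_DD}--\eqref{eq:subnormalization_BE_DD}. Nothing has to be extracted from Lemma 5.1 or from the proof of Theorem 6.2. Your remarks that these local factors are the same for every $i$ and do not involve $\rho$ are correct.

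The ``concrete way to see'' Step 2 is wrong, however. Let $A_I^{(i)}:={C_L^{(i)}}^\top C_L^{(i)}$ be the $\rho=I$ stiffness matrix. Then $\|C_L^{(i)}F^{(i)}\|^2=\lambda_{\max}(H^{(i)}A_I^{(i)})$, which is bounded uniformly in $L$; this is the upper half of BPX optimality, i.e.\ Lemma~\ref{lem:eigenvalues_bpx} with $\rho=I$. It does not grow like $4L$. Had it done so, you would get $\tilde\alpha^2=\mathcal O(d)$ and the $L$ would disappear from the statement. The factor $L$ is exactly the gap between the normalization $4dL$, which just adds per-level bounds, and an $\mathcal O(1)$ norm.

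What the stated form needs is the sharp lower bound $\|C_L^{(i)}F^{(i)}\|^2\ge 4L/(L+C_d)$, and the finest level alone supplies it. The last column block of $C_L^{(i)}F^{(i)}$ is $2^{-L(2-d)/2}C_L^{(i)}$, so
\[
\|C_L^{(i)}F^{(i)}\|^2\ \ge\ 2^{-L(2-d)}\,\lambda_{\max}(A_I^{(i)})\ =\ 4\bigl(1-\mathcal O(d\,4^{-L})\bigr).
\]
This holds because $A_I^{(i)}=\sum_s M\otimes\cdots\otimes K\otimes\cdots\otimes M$, with $K$ in the $s$-th slot, $K=h^{-1}\mathrm{tridiag}(-1,2,-1)$, $M=\tfrac h6\mathrm{tridiag}(1,4,1)$ and $h=2^{-L}$. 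Its top eigenvalue, obtained with the highest frequency in one direction and the lowest in the others, is $4h^{d-2}(1-\mathcal O(d\,h^2))$. Hence $\tilde\alpha^2\le dL\,(1+\mathcal O(d\,4^{-L}))\le d(L+C_d)$.

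Your fallback has a milder version of the same weakness. A condition-number bound together with Lemma~\ref{lem:eigenvalues_bpx} only gives $\|C_L^{(i)}F^{(i)}\|^2\ge\mu_{\min}$, hence $\tilde\alpha^2\le 4dL/\mu_{\min}$. That is $\mathcal O(dL)$, which would still suffice for Theorem~\ref{thm:QLSS_DD_BPX}, but it does not give the unit coefficient on $dL$ in the statement.
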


\begin{proof}
The normalization and subnormalization factors of the block-encodings of $C_L^{(i)} F^{(i)}$ are bounded by \cite[Theorem 6.3]{Deiml_2025} as follows 
\begin{subequations}
\begin{align}
    \alpha\left(U_{C_L^{(i)} F^{(i)}}\right)^2 & = 4dL, \\
    \tilde{\alpha}\left(U_{C_L^{(i)} F^{(i)}}\right)^2 & \leq d(L+C_d),
\end{align}
\end{subequations}
where $C_d$ is a constant that depends only on $d$. We conclude by injecting these bounds into \cref{thm:th_BE_DD}. 
\end{proof}

\begin{remark}[Block-encoding of $C_L Z F^{(0)}$]
We have not yet proposed an optimized block-encoding of $C_L Z F^{(0)}$. We recall that the columns of $Z$ span the coarse space, $C_L$ is the discontinuous gradient operator and $F^{(0)}$ factorizes the coarse space ($ (Z^\top A Z)^{-1} = F^{(0)} {F^{(0)}}^\top $). We do not anticipate this to be problematic. 
\end{remark}

We are interested in solving the linear system \cref{eq:prec_system} and computing some quantities of interest.
\begin{theorem}[Complexity of domain decomposition with local BPX solves]
Let $U_{D_\rho}$ and $U_{C_L Z F^{(0)}}$ be block-encodings of $D_\rho$ and $C_L Z F^{(0)}$. Let $w \in \mathbb{C}^{\mathcal{N}}$. Let $U_{\tilde{b}}$ and $U_{\tilde{w}}$ be oracles preparing $\ket{\tilde{b}} := \ket{{\tilde F}^\top {b}} $ and $\ket{\tilde{w}}:= \ket{{\tilde F}^\top {w}}$ respectively. 

Let $y$ be the solution to the preconditioned linear system \eqref{eq:prec_system}. We recall from Section~\ref{sec:geom} that $L$ is the mesh parameter, $N$ is the number of subdomains in the partition of $\Omega$, and $\delta$ is the amount of overlap between subdomains.

Computing the quantity $\braket{\tilde{w}}{y}$ with precision $\epsilon$ requires 
\[
\bigO{ \left( N L + \tilde{\alpha} \left(U_{C_L Z F^{(0)}} \right)^2 \right) \frac{1}{\epsilon \delta^2} \log\left(\frac{1}{\delta \epsilon}\right)}
\]
queries to $U_{\tilde{F}^\top A \tilde{F}}$ and its inverse and $\bigO{\frac{1}{\epsilon \delta}}$ queries to $U_{\tilde{b}}$ and $U_{\tilde{w}}$. The notation $\mathcal O$ does not hide any constants depending on $N$, $L$, $\delta$ or $\epsilon$. 
\label{thm:QLSS_DD_BPX}
\end{theorem}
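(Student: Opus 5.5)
The plan is to combine three ingredients already established: the condition number bound for the split preconditioned operator, the subnormalization bound of \cref{thm:th_BE_DD_BPX}, and a QSVT-based routine for estimating an inner product $\braket{\tilde{w}}{y}$. This routine is a variant of \cref{lem:qlss_qsvt} followed by amplitude estimation (or a Hadamard/swap-type test).

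\textbf{Condition number.} Set $\AA := \tilde{F}^\top A \tilde{F}$. By the discussion in Section~\ref{sec:splitprec}, $\kappa(\AA)=\lambda_{\max}(\Htwoap A)/\lambda_{\min}(\Htwoap A)$. Theorem~\ref{th:th_kappa_DD_BPX} bounds this ratio by
\[
\frac{\lambda_{\max}\max(1,\mu_{\max})}{\lambda_{\min}\min(1,\mu_{\min})}.
\]
Into this bound I inject three facts:
\begin{itemize}
\item $\lambda_{\max}\le 5$ from Theorem~\ref{th:DDtheory};
\item $\lambda_{\min}\ge C(1+1/\delta)^{-1}$, also from Theorem~\ref{th:DDtheory};
\item $\mu_{\min},\mu_{\max}=\mathcal O(1)$ independently of $L$, from \cref{lem:eigenvalues_bpx}.
\end{itemize}
Since $\delta\le 1$, this gives $\kappa(\AA)=\mathcal O(1/\delta)$, with constants depending only on $d$, $N_C$ and the contrast of $\rho$, and not on $N$, $L$ or $\delta$. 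Similarly, $\kappa(D_\rho)$ and $\tilde\alpha(U_{D_\rho})$ are treated as $\mathcal O(1)$ with respect to $N,L,\delta,\epsilon$, since $\rho$ is fixed. Then \cref{thm:th_BE_DD_BPX} gives
\[
\tilde\alpha(U_\AA)=\mathcal O\bigl(NL+\tilde\alpha(U_{C_LZF^{(0)}})^2\bigr),
\]
where $d$ and $C_d$ are absorbed into the constant and $L+C_d\lesssim L$ because $L\ge1$.

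\textbf{Inner-product estimation.} The step I expect to be the main obstacle is getting $1/\epsilon$ scaling rather than the naive $1/\epsilon^2$ from sampling. My first attempt is the following.
\begin{enumerate}
\item Use QSVT to build a block-encoding of a polynomial approximation of $\AA^+$ on the nonzero spectrum, which lies in $[\|\AA\|/\kappa,\|\AA\|]$. Rescaled by $\alpha(U_\AA)$, the relevant singular values lie in $[1/(\tilde\alpha\kappa),1/\tilde\alpha]$. The polynomial has degree $\mathcal O(\tilde\alpha\kappa\log(\kappa/\epsilon'))$ and approximates $\AA^+/\|\AA^+\|$ up to a subnormalization $\mathcal O(\kappa)$. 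Pseudo-inversion is handled as remarked after \cref{lem:qlss_qsvt}, since $y$ is only defined modulo $\ker\tilde F$ and the minimal-norm solution $\AA^+\tilde F^\top b$ is the natural choice.
\item Compose this with $U_{\tilde b}$ and $U_{\tilde w}^\dagger$ in a Hadamard-test circuit. Its acceptance amplitude encodes $\braket{\tilde w}{\AA^+\tilde b}$ divided by a factor $\mathcal O(\kappa)$.
\item Apply amplitude estimation to this circuit to precision $\epsilon/\kappa$. This takes $\mathcal O(\kappa/\epsilon)$ repetitions of the circuit, which gives the $\mathcal O(1/(\epsilon\delta))$ queries to $U_{\tilde b},U_{\tilde w}$.
\end{enumerate}

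\textbf{Counting queries.} Each repetition uses one QSVT circuit, hence $\mathcal O(\tilde\alpha\kappa\log(\kappa/\epsilon))$ queries to $U_\AA$ and $U_\AA^\dagger$. Multiplying by the $\mathcal O(\kappa/\epsilon)$ repetitions gives $\mathcal O(\tilde\alpha\kappa^2\epsilon^{-1}\log(\kappa/\epsilon))$ queries in total. Substituting $\kappa=\mathcal O(1/\delta)$ and the bound on $\tilde\alpha$ yields the stated
\[
\mathcal O\bigl((NL+\tilde\alpha(U_{C_LZF^{(0)}})^2)\,\epsilon^{-1}\delta^{-2}\log(1/(\delta\epsilon))\bigr).
\]
The remaining care is in the approximation budget: the polynomial error $\epsilon'$ must be set to $\epsilon/\kappa$ (or $\epsilon$ relative to the normalization) so that it does not dominate the estimation error. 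This only changes the constant inside the logarithm. Finally, I will check that no dependence on $N$, $L$, $\delta$, $\epsilon$ is hidden in the constants. This holds because the only non-explicit constants come from Theorem~\ref{th:DDtheory}, \cref{lem:eigenvalues_bpx}, $C_d$, and $\rho$.
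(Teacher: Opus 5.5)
Your argument is correct, and its first half is the paper's. You obtain $\kappa(\tilde F^\top A\tilde F)=\mathcal O(1/\delta)$ from Theorems~\ref{th:th_kappa_DD_BPX} and~\ref{th:DDtheory} with \cref{lem:eigenvalues_bpx}, and $\tilde\alpha(U_{\AA})=\mathcal O(NL+\tilde\alpha(U_{C_LZF^{(0)}})^2)$ from \cref{thm:th_BE_DD_BPX} with $\kappa(D_\rho)\tilde\alpha(U_{D_\rho})$ absorbed, exactly as the paper does.

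You differ in the estimation step. The paper uses \cref{lem:qlss_qsvt} as a black box. The amplitude-amplified solver prepares the normalized state $\ket{y}=y/\|y\|$ with $\mathcal O(\tilde\alpha\kappa^2\log(\kappa/\epsilon))$ queries to $U_{\AA}$ and $\mathcal O(\kappa)$ queries to $U_{\tilde b}$. A swap test with amplitude estimation then multiplies everything by $\epsilon^{-1}$. You drop the amplitude amplification and run a Hadamard test directly on the QSVT block-encoding of $\AA^+$. You pay for the worst-case $1/\kappa$ suppression of the amplitude through the finer estimation precision $\epsilon/\kappa$. The two factors of $\kappa$ therefore come from polynomial degree and estimation precision, rather than from degree and amplification. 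The totals coincide: $\mathcal O(\tilde\alpha\kappa^2\epsilon^{-1}\log(\kappa/\epsilon))$ queries to $U_{\AA}$ and $\mathcal O(\kappa/\epsilon)$ queries to the state preparations.

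Your route has three advantages. The Hadamard test returns the real and imaginary parts of the overlap, whereas the swap test only gives $|\braket{\tilde w}{y}|^2$. Amplitude estimation is applied to one fixed unitary, rather than to a preparation that itself contains amplitude amplification. You also obtain the unnormalized $\langle\tilde w|\AA^+|\tilde b\rangle$ directly, up to the classically known scaling of the polynomial. The paper's route is shorter and reuses the cited solver verbatim.

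Two points need tightening. First, the QSVT circuit block-encodes $\AA^+$ times a known factor of order $1/\|\AA^+\|$, so the operator itself loses only a constant. The $\mathcal O(\kappa)$ loss comes instead from the vector-dependent bound $\|\AA^+\ket{\tilde b}\|\ge\|\AA^+\|/\kappa$. Second, suppose the target is the overlap with the normalized $\ket{y}$, as in the paper's proof. Then you must divide your Hadamard-test amplitude by the success amplitude $s=\|p(\AA/\alpha)\ket{\tilde b}\|\gtrsim 1/\kappa$, where $p$ is your inversion polynomial. Estimating $s$ to relative precision $\epsilon$ takes one more amplitude estimation with $\mathcal O(\kappa/\epsilon)$ repetitions, so the stated bound is unchanged.
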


\begin{proof}
\cref{th:th_kappa_DD_BPX} and \cref{lem:eigenvalues_bpx} give $\kappa(\tilde{F}^\top A \tilde{F}) = \bigO{\frac{\lambda_{\max}(\Htwo A)}{\lambda_{\min}(\Htwo A)}}$, \textit{i.e.}, replacing the exact local solves by their BPX approximation does not introduce any extra dependencies in $N$, $L$ or $\delta$.
Then, the spectral bounds for $\Htwo A$ from \cref{th:DDtheory} give $\kappa(\tilde{F}^\top A \tilde{F}) = \bigO{\frac{1}{\delta}}$.
By injecting this, and the bound on the subnormalization constant from \cref{thm:th_BE_DD_BPX}, into \cref{lem:qlss_qsvt}, we obtain that the QSVT-based QLSS  outputs a state $\epsilon$-close to the normalized solution of the preconditioned system $\ket{y} := \frac{ y}{\| y \|}$ using $\bigO{ \left( N L + \tilde{\alpha} \left(U_{C_L Z F^{(0)}} \right)^2 \right) \frac{1}{\delta^2} \log\left(\frac{1}{\delta \epsilon}\right)}$ queries to $U_A$ and $U_A^\dag$ and $\bigO{\frac{1}{\delta}}$ queries to $U_{\tilde{b}}$ and $U_{\tilde{w}}$. The quantity $\kappa(D_\rho) \tilde{\alpha}(U_{D_\rho})$ related to the diffusion parameter is in the $\mathcal O$ following \cite[Proposition 3.4]{Deiml_2025}. 
Using amplitude estimation and a swap test, the computation of $\braket{\tilde{w}}{y}$ up to precision $\epsilon$ multiplies the previous result by a factor $\epsilon^{-1}$. 
\end{proof}

We remark that the complexity scales linearly with the number of subdomains. This is due to the particular construction of the block-encoding of the preconditioned system. In domain decomposition, it is standard to eliminate $N$ and instead have a dependency in the number of colors $N_C$ needed to color
the subdomains so that two overlapping subdomains never share the same color (see \textit{e.g}, the spectral result in Theorem~\ref{th:DDtheory}). We leave this for future work in the quantum formalism.

\section{Numerical simulations}
\label{sec:numerical}
We now turn to numerical simulations for two-level Additive Schwarz with BPX local solves. We have not included a coarse space in these simulations. The quantum circuits are emulated in Qiskit \cite{qiskit2024} for a one-dimensional domain ($d=1$). The code associated with \cite{Deiml_2025} is used for BPX local solvers. The inner product is split as $\left[(\tilde{F}^\top C_L^\top)^+ \tilde{w} \right]^\top (\tilde{F}^\top C_L^\top)^+ \tilde{b}$ \cite{Deiml_2025}. The linear systems are inverted using QSVT with the polynomial from \cite[Lemma 17-19]{zbMATH06823707} and \cite[Lemma 40]{Gily_n_2019}. This polynomial is normalized to satisfy the requirements of QSVT. Its degree, denoted by $D$, is computed using the quantity $\kappa(\tilde{F}^\top C_L^\top) \tilde{\alpha}(\tilde{F}^\top C_L^\top)$ and the target QSVT accuracy $\epsilon=10^{-6}$. The angles for the polynomials are computed using the library pysqp \cite{mrtc_unification_21,cdghs_finding_qsp_angles_20,dmwl_efficient_phases_21}. The domain $\Omega$ depends on the number of subdomains. We choose the right-hand side as $f=1$ so that $b=M (1 \cdots 1)^\top$ where $M$ is the mass matrix. We choose $w=b$ for the computation of the inner product. In that case, only the norm of the quantum solution is estimated from the probability of success of QSVT. The norm is multiplied by the normalization constant of the polynomial and by the norm of the right-hand side and divided by the normalization constant. The mesh size is $2^{-L}$ with $L=4$. Two values of numbers of subdomains are considered $N_1 = 2$ and $N_1=4$. The simulations are run $100$ times, each with $10^6$ samples. The results are reported in \cref{tab:tab_inner_product}. We check that the computed mean value $\mathcal{Q}_{\text{mean}}$ is a good approximation of $\mathcal{Q}_{\text{ref}}=w^\top b$ computed classically. We also report the effective condition number $\kappa(\tilde{F}^\top C_L^\top) \tilde{\alpha}(U_{\tilde{F}^\top C_L^\top})$ and the degree of the inversion polynomial. Both depend on  the number of subdomains $N_1$. Adding the coarse correction would eliminate this dependency.

\begin{table}[htbp]
\caption{Results from a 1D simulation. The classical value of the inner product is $\mathcal{Q}_{\text{ref}}=w^\top b$. $\mathcal{Q}_{\text{mean}}$ denotes the mean value of the $100$ runs. $\mathcal{Q}_{2.5}$ and $\mathcal{Q}_{97.5}$ are the $2.5$ and $97.5$ percentiles respectively.}\label{tab:tab_inner_product}
\begin{center}
\begin{tabular}{|c | c | c | c | c | c | c |}
 \hline
 $N_1$ & $\kappa(\tilde{F}^\top C_L^\top) \tilde{\alpha}(U_{\tilde{F}^\top C_L^\top})$ & $D$ & $\mathcal{Q}_{\text{ref}}$ & $\mathcal{Q}_{\text{mean}}$ & $\mathcal{Q}_{0.25}$ & $\mathcal{Q}_{0.75}$ \\ [0.5ex]
 \hline\hline
 $2$ & $10.47$ & $201$ & $0.549 $ & $0.548$ & $0.540$ & $0.556$  \\
 \hline
 $4$ & $26.75$ & $549$ & $3.97$ & $3.97$ & $3.90$ & $4.02$  \\
 \hline
\end{tabular}
\end{center}
\end{table}

\section{Conclusions}
\label{sec:conclusions}
 In this work, we have introduced the first quantum domain decomposition preconditioner. We have demonstrated that it is feasible to block-encode the preconditioned problem that results from preconditioning Poisson by the Additive Schwarz preconditioner. We have studied the overall complexity of measuring a quantity of interest. In future work we plan to investigate other choices for the local solves: sparse approximate inverses, circulant preconditioners, Neumann solves\dots Particular attention should also be paid to the quantum coarse correction. Finally, we are optimistic that the linear dependency of the complexity in the number of subdomains can be replaced by the coloring constant.

\bibliographystyle{siamplain}
\bibliography{references}

@article{mrtc_unification_21,
  title = {Grand Unification of Quantum Algorithms},
  author = {Martyn, John M. and Rossi, Zane M. and Tan, Andrew K. and Chuang, Isaac L.},
  journal = {PRX Quantum},
  volume = {2},
  issue = {4},
  pages = {040203},
  numpages = {40},
  year = {2021},
  month = {Dec},
  publisher = {American Physical Society},
  doi = {10.1103/PRXQuantum.2.040203},
  url = {https://link.aps.org/doi/10.1103/PRXQuantum.2.040203}
}

@article{dmwl_efficient_phases_21,
  title={Efficient phase-factor evaluation in quantum signal processing},
  volume={103},
  ISSN={2469-9934},
  url={http://dx.doi.org/10.1103/PhysRevA.103.042419},
  DOI={10.1103/physreva.103.042419},
  number={4},
  journal={Physical Review A},
  publisher={American Physical Society (APS)},
  author={Dong, Yulong and Meng, Xiang and Whaley, K. Birgitta and Lin, Lin},
  year={2021},
  month=apr
}

@article{cdghs_finding_qsp_angles_20,
  title={Finding Angles for Quantum Signal Processing with Machine Precision},
  author={Rui Chao and Dawei Ding and Andras Gilyen and Cupjin Huang and Mario Szegedy},
  year={2020},
  eprint={2003.02831},
  archivePrefix={arXiv},
  primaryClass={quant-ph}
}

@article{zbMATH01592007,
 author = {Klawonn, Axel and Widlund, Olof B.},
 title = {{FETI} and {Neumann}-{Neumann} iterative substructuring methods: {Connections} and new results},
 fjournal = {Communications on Pure and Applied Mathematics},
 journal = {Commun. Pure Appl. Math.},
 issn = {0010-3640},
 volume = {54},
 number = {1},
 pages = {57--90},
 year = {2001},
 language = {English},
 doi = {10.1002/1097-0312(200101)54:1<57::AID-CPA3>3.0.CO;2-D},
 url = {publica.fraunhofer.de/documents/B-73220.html},
 zbMATH = {1592007},
 Zbl = {1023.65120}
}

@book{zbMATH01953444,
 author = {Saad, Yousef},
 title = {Iterative methods for sparse linear systems.},
 edition = {2nd ed.},
 isbn = {0-89871-534-2},
 year = {2003},
 publisher = {Philadelphia, PA: SIAM Society for Industrial {and} Applied Mathematics},
 language = {English},
 zbMATH = {1953444},
 Zbl = {1031.65046}
}

@book{zbMATH00635667,
 author = {Quarteroni, Alfio and Valli, Alberto},
 title = {Numerical approximation of partial differential equations},
 fseries = {Springer Series in Computational Mathematics},
 series = {Springer Ser. Comput. Math.},
 issn = {0179-3632},
 volume = {23},
 isbn = {3-540-57111-6},
 year = {1994},
 publisher = {Berlin: Springer},
 language = {English},
 zbMATH = {635667},
 Zbl = {0803.65088}
}

@article{Deiml_2025,
   title={Quantum realization of the finite element method},
   ISSN={0025-5718},
   url={http://dx.doi.org/10.1090/mcom/4137},
   DOI={10.1090/mcom/4137},
   journal={Mathematics of Computation},
   publisher={American Mathematical Society (AMS)},
   author={Deiml, M. and Peterseim, D.},
   year={2025},
   month=aug }

@article{Harrow_2009,
   title={Quantum Algorithm for Linear Systems of Equations},
   volume={103},
   ISSN={1079-7114},
   url={http://dx.doi.org/10.1103/PhysRevLett.103.150502},
   DOI={10.1103/physrevlett.103.150502},
   number={15},
   journal={Physical Review Letters},
   publisher={American Physical Society (APS)},
   author={Harrow, Aram W. and Hassidim, Avinatan and Lloyd, Seth},
   year={2009},
   month=oct }

@incollection{zbMATH06070940,
 author = {Ambainis, Andris},
 title = {Variable time amplitude amplification and quantum algorithms for linear algebra problems},
 booktitle = {STACS 2012. 29th international symposium on theoretical aspects of computer science, Paris, France, February 29th -- March 3rd, 2012},
 isbn = {978-3-939897-35-4},
 pages = {636--647},
 year = {2012},
 publisher = {Wadern: Schloss Dagstuhl -- Leibniz Zentrum f{\"u}r Informatik},
 language = {English},
 doi = {10.4230/LIPIcs.STACS.2012.636},
 zbMATH = {6070940},
 Zbl = {1245.68084}
}

@article{zbMATH06823707,
 author = {Childs, Andrew M. and Kothari, Robin and Somma, Rolando D.},
 title = {Quantum algorithm for systems of linear equations with exponentially improved dependence on precision},
 fjournal = {SIAM Journal on Computing},
 journal = {SIAM J. Comput.},
 issn = {0097-5397},
 volume = {46},
 number = {6},
 pages = {1920--1950},
 year = {2017},
 language = {English},
 doi = {10.1137/16M1087072},
 zbMATH = {6823707},
 Zbl = {1383.68034}
}

@inproceedings{Gily_n_2019, 
series={STOC ’19},
   title={Quantum singular value transformation and beyond: exponential improvements for quantum matrix arithmetics},
   url={http://dx.doi.org/10.1145/3313276.3316366},
   DOI={10.1145/3313276.3316366},
   booktitle={Proceedings of the 51st Annual ACM SIGACT Symposium on Theory of Computing},
   publisher={ACM},
   author={Gilyén, András and Su, Yuan and Low, Guang Hao and Wiebe, Nathan},
   year={2019},
  pages={193–204},
   collection={STOC ’19} }

@article{PhysRevLett.122.060504,
  title = {Quantum Algorithms for Systems of Linear Equations Inspired by Adiabatic Quantum Computing},
  author = {Suba\ifmmode \mbox{\c{s}}\else \c{s}\fi{}\ifmmode \imath \else \i \fi{}, Yi\ifmmode \breve{g}\else \u{g}\fi{}it and Somma, Rolando D. and Orsucci, Davide},
  journal = {Phys. Rev. Lett.},
  volume = {122},
  issue = {6},
  pages = {060504},
  numpages = {5},
  year = {2019},
  month = {Feb},
  publisher = {American Physical Society},
  doi = {10.1103/PhysRevLett.122.060504},
  url = {https://link.aps.org/doi/10.1103/PhysRevLett.122.060504}
}

@article{Lin2020optimalpolynomial,
  doi = {10.22331/q-2020-11-11-361},
  url = {https://doi.org/10.22331/q-2020-11-11-361},
  title = {Optimal polynomial based quantum eigenstate filtering with application to solving quantum linear systems},
  author = {Lin, Lin and Tong, Yu},
  journal = {{Quantum}},
  issn = {2521-327X},
  publisher = {{Verein zur F{\"{o}}rderung des Open Access Publizierens in den Quantenwissenschaften}},
  volume = {4},
  pages = {361},
  month = nov,
  year = {2020}
}

@article{10.1145/3498331,
author = {An, Dong and Lin, Lin},
title = {Quantum Linear System Solver Based on Time-optimal Adiabatic Quantum Computing and Quantum Approximate Optimization Algorithm},
year = {2022},
issue_date = {June 2022},
publisher = {Association for Computing Machinery},
address = {New York, NY, USA},
volume = {3},
number = {2},
url = {https://doi.org/10.1145/3498331},
doi = {10.1145/3498331},
journal = {ACM Transactions on Quantum Computing},
month = mar,
articleno = {5},
numpages = {28}
}

@article{PRXQuantum.3.040303,
  title = {Optimal Scaling Quantum Linear-Systems Solver via Discrete Adiabatic Theorem},
  author = {Costa, Pedro C.S. and An, Dong and Sanders, Yuval R. and Su, Yuan and Babbush, Ryan and Berry, Dominic W.},
  journal = {PRX Quantum},
  volume = {3},
  issue = {4},
  pages = {040303},
  numpages = {54},
  year = {2022},
  month = {Oct},
  publisher = {American Physical Society},
  doi = {10.1103/PRXQuantum.3.040303},
  url = {https://link.aps.org/doi/10.1103/PRXQuantum.3.040303}
}

@article{Low2026quantumlinearsystem,
  doi = {10.22331/q-2026-03-23-2041},
  url = {https://doi.org/10.22331/q-2026-03-23-2041},
  title = {Quantum linear system algorithm with optimal queries to initial state preparation},
  author = {Low, Guang Hao and Su, Yuan},
  journal = {{Quantum}},
  issn = {2521-327X},
  publisher = {{Verein zur F{\"{o}}rderung des Open Access Publizierens in den Quantenwissenschaften}},
  volume = {10},
  pages = {2041},
  month = mar,
  year = {2026}
}

@article{BravoPrieto2023variationalquantum,
  doi = {10.22331/q-2023-11-22-1188},
  url = {https://doi.org/10.22331/q-2023-11-22-1188},
  title = {Variational {Q}uantum {L}inear {S}olver},
  author = {Bravo-Prieto, Carlos and LaRose, Ryan and Cerezo, M. and Subasi, Yigit and Cincio, Lukasz and Coles, Patrick J.},
  journal = {{Quantum}},
  issn = {2521-327X},
  publisher = {{Verein zur F{\"{o}}rderung des Open Access Publizierens in den Quantenwissenschaften}},
  volume = {7},
  pages = {1188},
  month = nov,
  year = {2023}
}

@misc{morales2025quantumlinearsolverssurvey,
      title={Quantum Linear System Solvers: A Survey of Algorithms and Applications}, 
      author={Mauro E. S. Morales and Lirandë Pira and Philipp Schleich and Kelvin Koor and Pedro C. S. Costa and Dong An and Alán Aspuru-Guzik and Lin Lin and Patrick Rebentrost and Dominic W. Berry},
      year={2025},
      eprint={2411.02522},
      archivePrefix={arXiv},
      primaryClass={quant-ph},
      url={https://arxiv.org/abs/2411.02522}, 
}

@article{PhysRevLett.110.250504,
  title = {Preconditioned Quantum Linear System Algorithm},
  author = {Clader, B. D. and Jacobs, B. C. and Sprouse, C. R.},
  journal = {Phys. Rev. Lett.},
  volume = {110},
  issue = {25},
  pages = {250504},
  numpages = {5},
  year = {2013},
  month = {Jun},
  publisher = {American Physical Society},
  doi = {10.1103/PhysRevLett.110.250504},
  url = {https://link.aps.org/doi/10.1103/PhysRevLett.110.250504}
}

@article{PhysRevA.98.062321,
  title = {Quantum circulant preconditioner for a linear system of equations},
  author = {Shao, Changpeng and Xiang, Hua},
  journal = {Phys. Rev. A},
  volume = {98},
  issue = {6},
  pages = {062321},
  numpages = {9},
  year = {2018},
  month = {Dec},
  publisher = {American Physical Society},
  doi = {10.1103/PhysRevA.98.062321},
  url = {https://link.aps.org/doi/10.1103/PhysRevA.98.062321}
}

@article{PhysRevA.93.032324,
  title = {Quantum algorithms and the finite element method},
  author = {Montanaro, Ashley and Pallister, Sam},
  journal = {Phys. Rev. A},
  volume = {93},
  issue = {3},
  pages = {032324},
  numpages = {14},
  year = {2016},
  month = {Mar},
  publisher = {American Physical Society},
  doi = {10.1103/PhysRevA.93.032324},
  url = {https://link.aps.org/doi/10.1103/PhysRevA.93.032324}
}

@article{PhysRevA.104.032422,
  title = {Fast inversion, preconditioned quantum linear system solvers, fast Green's-function computation, and fast evaluation of matrix functions},
  author = {Tong, Yu and An, Dong and Wiebe, Nathan and Lin, Lin},
  journal = {Phys. Rev. A},
  volume = {104},
  issue = {3},
  pages = {032422},
  numpages = {33},
  year = {2021},
  month = {Sep},
  publisher = {American Physical Society},
  doi = {10.1103/PhysRevA.104.032422},
  url = {https://link.aps.org/doi/10.1103/PhysRevA.104.032422}
}

@misc{childs2026quantumalgorithmsheterogeneouspdes,
      title={Quantum Algorithms for Heterogeneous PDEs: The Neutron Diffusion Eigenvalue Problem}, 
      author={Andrew M. Childs and Lincoln Johnston and Brian Kiedrowski and Mahathi Vempati and Jeffery Yu},
      year={2026},
      eprint={2604.05098},
      archivePrefix={arXiv},
      primaryClass={quant-ph},
      url={https://arxiv.org/abs/2604.05098}, 
}

@misc{jin2025quantumpreconditioningmethodlinear,
      title={Quantum preconditioning method for linear systems problems via Schr\"odingerization}, 
      author={Shi Jin and Nana Liu and Chuwen Ma and Yue Yu},
      year={2025},
      eprint={2505.06866},
      archivePrefix={arXiv},
      primaryClass={math.NA},
      url={https://arxiv.org/abs/2505.06866}, 
}

@misc{draper2000additionquantumcomputer,
      title={Addition on a Quantum Computer}, 
      author={Thomas G. Draper},
      year={2000},
      eprint={quant-ph/0008033},
      archivePrefix={arXiv},
      primaryClass={quant-ph},
      url={https://arxiv.org/abs/quant-ph/0008033}, 
}

@misc{busaleh2026mitigatingbarrenplateausdomain,
      title={Mitigating Barren Plateaus via Domain Decomposition in Variational Quantum Algorithms for Nonlinear PDEs}, 
      author={Laila S. Busaleh and Jeonghyeuk Kwon and Orlane Zang and Muhammad Hassan and Yvon Maday},
      year={2026},
      eprint={2603.24523},
      archivePrefix={arXiv},
      primaryClass={math.NA},
      url={https://arxiv.org/abs/2603.24523}, 
}

@article{Yuan_2023,
doi = {10.1088/1367-2630/acfd52},
url = {https://doi.org/10.1088/1367-2630/acfd52},
year = {2023},
month = {oct},
publisher = {IOP Publishing},
volume = {25},
number = {10},
pages = {103011},
author = {Yuan, Yewei and Wang, Chao and Wang, Bei and Chen, Zhao-Yun and Dou, Meng-Han and Wu, Yu-Chun and Guo, Guo-Ping},
title = {An improved QFT-based quantum comparator and extended modular arithmetic using one ancilla qubit},
journal = {New Journal of Physics}
}

@article{10.5555/2011517.2011525,
    author = {Beauregard, Stephane},
    title = {Circuit for Shor's algorithm using 2n+3 qubits},
    year = {2003},
    issue_date = {March 2003},
    publisher = {Rinton Press, Incorporated},
    address = {Paramus, NJ},
    volume = {3},
    number = {2},
    issn = {1533-7146},
    journal = {Quantum Info. Comput.},
    month = mar,
    pages = {175–185},
    numpages = {11}
    }

@misc{balazi2026quantumenhancednumericalhomogenization,
      title={Quantum Enhanced Numerical Homogenization}, 
      author={Loïc Balazi and Matthias Deiml and Daniel Peterseim},
      year={2026},
      eprint={2603.28521},
      archivePrefix={arXiv},
      primaryClass={math.NA},
      url={https://arxiv.org/abs/2603.28521}, 
}

@inproceedings{Koska:2025ucz,
    author = "Koska, Oc{\'e}ane and Baboulin, Marc and Gazda, Arnaud",
    title = "{A mixed-precision quantum-classical algorithm for solving linear systems}",
    booktitle = "{2025 IEEE International Parallel and Distributed Processing Symposium Workshops}",
    eprint = "2502.02212",
    archivePrefix = "arXiv",
    primaryClass = "quant-ph",
    doi = "10.1109/IPDPSW66978.2025.00081",
    month = "2",
    year = "2025"
}

@book{zbMATH02113718,
 author = {Toselli, Andrea and Widlund, Olof},
 title = {Domain decomposition methods -- algorithms and theory.},
 fseries = {Springer Series in Computational Mathematics},
 series = {Springer Ser. Comput. Math.},
 issn = {0179-3632},
 volume = {34},
 isbn = {3-540-20696-5},
 year = {2005},
 publisher = {Berlin: Springer},
 language = {English},
 zbMATH = {2113718},
 Zbl = {1069.65138}
}

@misc{zbMATH04197323,
 author = {Bramble, J. H. and Pasciak, J. E. and Xu, J.},
 title = {Parallel multilevel preconditioners},
 year = {1990},
 language = {English},
 howpublished = {Numerical analysis, {Proc}. 13th {Biennial} {Conf}., {Dundee}/{UK} 1989, {Pitman} {Res}. {Notes} {Math}. {Ser}. 228, 23-39 (1990).},
 zbMATH = {4197323},
 Zbl = {0725.65095}
}

@misc{lin2026lecturenotes,
      title={Quantum Algorithms for Scientific Computation}, 
      author={Lin, Lin and Wiebe, Nathan},
      year={2026},
      url={https://math.berkeley.edu/~linlin/qasc/live_notes_0429.pdf}, 
}

@misc{bagherimehrab2025fastquantumalgorithmdifferential,
      title={Fast quantum algorithm for differential equations}, 
      author={Mohsen Bagherimehrab and Kouhei Nakaji and Nathan Wiebe and Gavin K. Brennen and Barry C. Sanders and Alán Aspuru-Guzik},
      year={2025},
      eprint={2306.11802},
      archivePrefix={arXiv},
      primaryClass={quant-ph},
      url={https://arxiv.org/abs/2306.11802}, 
}

@Article{zbMATH06303461,
 Author = {Spillane, N. and Dolean, V. and Hauret, P. and Nataf, F. and Pechstein, C. and Scheichl, R.},
 Title = {Abstract robust coarse spaces for systems of {PDEs} via generalized eigenproblems in the overlaps},
 FJournal = {Numerische Mathematik},
 Journal = {Numer. Math.},
 ISSN = {0029-599X},
 Volume = {126},
 Number = {4},
 Pages = {741--770},
 Year = {2014},
 Language = {English},
 DOI = {10.1007/s00211-013-0576-y},
 URL = {strathprints.strath.ac.uk/44827/1/10.1007_s00211_013_0576_y.pdf},
 zbMATH = {6303461},
 Zbl = {1291.65109},
 Note={\url{https://people.bath.ac.uk/masrs/2011-07.pdf}},
}

@article{spillane_geneo_2025,
        title = {{GenEO} spectral coarse spaces in {SPD} domain decomposition},
        issn = {1572-9265},
        url = {https://doi.org/10.1007/s11075-025-02166-x},
        doi = {10.1007/s11075-025-02166-x},
        journal = {Numerical Algorithms},
        author = {Spillane, Nicole},
        NOTE = {\url{https://hal.science/hal-03186276/document}},
        year = {2025},
}

@misc{gu2025quantumsimulationhelmholtzequations,
      title={Quantum simulation of Helmholtz equations via Schr{\"o}dingerization}, 
      author={Anjiao Gu and Shi Jin and Chuwen Ma},
      year={2025},
      eprint={2507.23547},
      archivePrefix={arXiv},
      primaryClass={math.NA},
      url={https://arxiv.org/abs/2507.23547}, 
}

@misc{arXiv:2512.06166,
 author = {Boou Jiang and Jongho Park and Jinchao Xu},
 title = {A polynomial dimension-dependence analysis of {Bramble}--{Pasciak}--{Xu} preconditioners},
 year = {2025},
 howpublished = {Preprint, {arXiv}:2512.06166 [math.{NA}] (2025)},
 url = {https://arxiv.org/abs/2512.06166},
 arXiv = {arXiv:2512.06166}
}

@TechReport{petsc-user-ref,
       author = {Satish Balay and Shrirang Abhyankar and Mark~F. Adams and Jed Brown and Peter Brune
                 and Kris Buschelman and Lisandro Dalcin and Alp Dener and Victor Eijkhout and William~D. Gropp
                 and Dmitry Karpeyev and Dinesh Kaushik and Matthew~G. Knepley and Dave~A. May and Lois Curfman McInnes
                 and Richard Tran Mills and Todd Munson and Karl Rupp and Patrick Sanan
                 and Barry~F. Smith and Stefano Zampini and Hong Zhang and Hong Zhang},
       title  = {{PETS}c Users Manual},
       institution = {Argonne National Laboratory},
       year   = 2021,
       number = {ANL-95/11 - Revision 3.15},
       url    = {https://www.mcs.anl.gov/petsc}
     }

@article{zbMATH05172548,
 author = {Graham, I. G. and Lechner, P. O. and Scheichl, R.},
 title = {Domain decomposition for multiscale {PDEs}},
 fjournal = {Numerische Mathematik},
 journal = {Numer. Math.},
 issn = {0029-599X},
 volume = {106},
 number = {4},
 pages = {589--626},
 year = {2007},
 language = {English},
 doi = {10.1007/s00211-007-0074-1},
 zbMATH = {5172548},
 Zbl = {1141.65084}
}

@article{zbMATH05029831,
 author = {Gander, Martin J.},
 title = {Optimized {Schwarz} methods},
 fjournal = {SIAM Journal on Numerical Analysis},
 journal = {SIAM J. Numer. Anal.},
 issn = {0036-1429},
 volume = {44},
 number = {2},
 pages = {699--731},
 year = {2006},
 language = {English},
 doi = {10.1137/S0036142903425409},
 url = {archive-ouverte.unige.ch/unige:171407},
 zbMATH = {5029831},
 Zbl = {1117.65165}
}

@article{zbMATH05651315,
 author = {Pechstein, Clemens and Scheichl, Robert},
 title = {Scaling up through domain decomposition},
 fjournal = {Applicable Analysis},
 journal = {Appl. Anal.},
 issn = {0003-6811},
 volume = {88},
 number = {10-11},
 pages = {1589--1608},
 year = {2009},
 language = {English},
 doi = {10.1080/00036810903157204},
 zbMATH = {5651315},
 Zbl = {1281.76040}
}

@article{zbMATH00168423,
 author = {Xu, Jinchao},
 title = {Iterative methods by space decomposition and subspace correction},
 fjournal = {SIAM Review},
 journal = {SIAM Rev.},
 issn = {0036-1445},
 volume = {34},
 number = {4},
 pages = {581--613},
 year = {1992},
 language = {English},
 doi = {10.1137/1034116},
 zbMATH = {168423},
 Zbl = {0788.65037}
}

@misc{qiskit2024,
      title={Quantum computing with {Q}iskit},
      author={Javadi-Abhari, Ali and Treinish, Matthew and Krsulich, Kevin and Wood, Christopher J. and Lishman, Jake and Gacon, Julien and Martiel, Simon and Nation, Paul D. and Bishop, Lev S. and Cross, Andrew W. and Johnson, Blake R. and Gambetta, Jay M.},
      year={2024},
      doi={10.48550/arXiv.2405.08810},
      eprint={2405.08810},
      archivePrefix={arXiv},
      primaryClass={quant-ph}
}
\end{document}